\newtheorem{theorem}{Theorem}[section]
\newtheorem{lemma}[theorem]{Lemma}
\newtheorem{corol}[theorem]{Corollary}
\numberwithin{equation}{section}
\theoremstyle{remark}
\newtheorem{remk}[theorem]{Remark}
\theoremstyle{definition}
\newtheorem{defin}[theorem]{Definition}
\def\al{\alpha} 		\def\be{\beta}   
\def\eps{\varepsilon}		\def\ga{\gamma}   
 		\def\vi{\varphi}   
 \def\la{\lambda}	
 \def\si{\sigma}		
\def\Ga{\varGamma}     \def\Om{\Omega}		\def\Si{\varSigma}
\def\De{\Delta}				\def\La{\Lambda}
\def\mZ{\mathbb Z}		
\def\mQ{\mathbb Q}		
		\def\mP{\mathbb P}
\def\mD{\mathbb D} 		 
\def\aK{\mathbbm k}
\def\ba{\bar{a}}				\def\bb{\bar{b}}
		\def\bP{\mathbf P}
		\def\gM{\mathfrak m}
\def\kT{\mathcal T}		\def\kQ{\mathcal Q} 
\def\kR{\mathcal R} 
\def\hH{\hat{H}}	\def\hM{\hat{M}}	
		\def\hR{\hat{R}}
\def\hA{\hat{A}}
	\def\Tf{\tilde{f}}
\def\tV{\tilde{V}}	\def\tM{M^\sharp}
	\def\tR{R^\sharp}
\def\htR{\hat{R}^\sharp}	\def\tN{N^\sharp}
\def\oM{\bar{M}}		\def\oN{\bar{N}}
\def\oS{\bar{S}}		\def\oT{\bar{T}}
		\def\oal{\bar{\al}}
\def\obe{\bar{\be}}	\def\oL{\bar{L}}
		\def\oC{\bar{C}}
\def\oxi{\bar{\xi}}
\def\dd{\partial}
\def\0{\emptyset}	
\def\lat{\mbox{-}\mathrm{lat}}
\def\Hom{\mathop\mathrm{Hom}\nolimits}
\def\aut{\mathop\mathrm{Aut}\nolimits}
\def\Dim{\mathop\mathbf{dim}\nolimits}
\def\rad{\mathop\mathrm{rad}\nolimits}
\def\cok{\mathop\mathrm{Coker}\nolimits}
\def\Ch{\mathop\mathrm{Ch}\nolimits}
\def\Cr{\mathop\mathrm{Cr}\nolimits}
\def\End{\mathop\mathrm{End}\nolimits}
\def\Aut{\mathop\mathrm{Aut}\nolimits}
\def\GL{\mathop\mathrm{GL}\nolimits}
\def\Mat{\mathop\mathrm{Mat}\nolimits}
\def\im{\mathop\mathrm{Im}\nolimits}
\def\ker{\mathop\mathrm{Ker}\nolimits}
\def\rep{\mathop\mathrm{rep}\nolimits}
\def\lat{\mbox{-}\mathrm{lat}}
\def\mtr#1{\begin{pmatrix}#1\end{pmatrix}}
\def\smtr#1{\left(\begin{smallmatrix}#1\end{smallmatrix}\right)}
\def\setsuch#1#2{\left\{\,#1\mid #2\,\right\}}
\def\*{\otimes}		\def\+{\oplus}	\def\xx{\times}
\def\8{\infty}
\def\sb{\subset}         \def\sp{\supset}
\def\spe{\supseteq}      \def\sbe{\subseteq}
\def\bop{\bigoplus}
\def\bul{{\scriptscriptstyle\bullet}}
\def\ito{\stackrel{\sim}{\to}}
\def\xarr{\xrightarrow}
\def\ito{\stackrel{\sim}{\to}}
\newcommand{\rp}[5]{\fbox{$\scriptstyle #1\,\begin{smallmatrix}#2\\#3\\#4\\#5\end{smallmatrix}$}}
\def\iff{if and only if }
\def\AR{Auslander--Reiten}
\def\crg{crystallographic group}
\def\chg{Chernikov group}
\def\oc{one-to-one correspondence}
\def\wrt{with respect to }
\begin{document}

\title[Cohomologies of regular lattices]{Cohomologies of regular lattices \\ over the Kleinian 4-group}
\author{Yuriy Drozd}
\author{Andriana Plakosh}
\address{Institute of Mathematics\\ National Academy of Sciences of Ukraine \\
Tereschenkivska 3\\ 01024 Kyiv\\ Ukraine}
\email{y.a.drozd@gmail.com, drozd@imath.kiev.ua \emph{(corresponding author)}}
\email{andrianaplakoshmail@gmail.com}
\urladdr{https://www.imath.kiev.ua/~drozd}
\thanks{This work was supported within the framework of the project ``Innovative methods in the theory of differential equations, computational mathematics and mathematical modeling'', No.\,7/1/241 (Program of support of priority for the state scientific researches and scientific and technical (experimental)  developments of the Department of Mathematics NAS of Ukraine for 2022-2023).}
\keywords{Kleinian 4-group, regular lattices, cohomology, action of automorphisms, crystallographic groups, Chernikov groups}

\begin{abstract}
 We calculate explicitly cohomologies of the lattices over the Kleinian 4-group belonging to the regular components of the
 Auslander--Reiten quiver as well as of their dual modules. We also give a canonical form of cohomology classes under the action of
 automorphisms. The result is applied to the classification of some crystallographic and Chernikov groups.
\end{abstract}

\maketitle 

\tableofcontents


 The aim of this paper is to apply the results on cohomologies of the Kleinian 4-group \cite{dpl2} to the classification of crystallographic and Chernikov
 groups. For this purpose it is important to have an explicit presentation of 2-cocycles. We find such presentation for a special class of lattices,
 called \emph{regular}, and for their dual modules. Moreover, we describe the orbits of the action of automorphisms of modules on
 cohomologies. From this results we obtain a complete description of crystallographic and Chernikov groups with the Kleinian top and
 regular base.

 \section{Lattices over the Kleinian 4-group.}
 \label{s1}
 
   In what follows $K$ denotes the Kleinian 4-group, $K=\langle a,b \mid a^2=b^2=1,\,ab=ba \rangle$. We study cohomologies of this
  group with the values in $K$-\emph{lattices}, i.e. $K$-modules $M$ such that the additive group of $M$ is free abelian of finite rank,
  and in their \emph{duals}, i.e. the modules $\Hom_\mZ(M,\mQ/\mZ)$. Let $R=\mZ K$. We embed it into $\tR=\mZ^4$ identifying $a$ with
  the quadruple $(1,1,-1,-1)$ and $b$ with $(1,-1,1,-1)$. Note that $\tR$ is the integral closure of $R$ in $\mQ\*_\mZ R$.
   Let $\mZ_p$ be the ring of $p$-adic integers, $M_p=M\*_\mZ\mZ_p$ for every
  abelian group $M$. Then $R_p=\mZ_p^4$ for $p\ne2$ and $R_2\spe 4\mZ_2^4$. It follows from \cite[Th.\,3.7]{adeles} that two
  $K$-lattices $M,N$ are isomorphic \iff they are \emph{in the same genus}, i.e. $M_p\simeq N_p$ for all $p$. Moreover, if $p\ne2$, the 
  $R_p$-lattice $M_p$ is uniquely defined by the rational envelope $\mQ\*_\mZ M$. Therefore, a $K$-lattice $M$ is uniquely detremined
  by its $2$-adic completion, which we denote by $\hM$. We denote by $R\lat$ the category of $R$-lattices and by $\hR\lat$ 
  the category of \emph{$\hR$-lattices}, i.e. $\hR$-modules which are finitely generated and torsion free (hence free) as $\mZ_2$-modules.  
   The functor $M\mapsto\hM$ is a \emph{representation equivalence} between the categories $R\lat$ and $\hR\lat$,
  i.e. it maps non-isomorphic modules to non-isomorphic, indecomposable to indecomposable and every $\hR$-lattice is isomorphic to $\hM$
   for a uniquely defined $R$-lattice $M$. 
   
    Since $4\hH^n(K,M)=0$ for any $K$-module $M$ \cite[Prop.\,XII.2.5]{CE}, 
   $\hH^n(K,M)\simeq\hH^n(K,\hM)$. Let $\mD=\mQ_2/\mZ_2$, where $\mQ_2$ is the field of $p$-adic numbers. 
   It is the group \emph{of type $2^\8$}, i.e. the direct limit $\varinjlim_n{\mZ/2^n\mZ}$ of finite cyclic $2$-groups with respect to the
    natural embeddings $\mZ/2^n\mZ\to\mZ/2^{n+1}\mZ$. We call $K$-modules of the form $\Hom_\mZ(M,\mD)\simeq\Hom_{\mZ_2}(\hM,\mD)$, 
    where $M$ is a $K$-lattice, \emph{$K$-colattices}. 

    The ring $R$ is \emph{Gorenstein}, i.e. $\mathrm{inj.dim}_RR=1$. Since $R_p$ is a maximal order for $p\ne2$ 
    and $R_2$ is local, \cite[Lem.\,2.9]{qbass} implies that $R$ has a unique minimal proper overring $A$ and every indecomposable 
  $R$-lattice, except $R$ itself, is an $A$-lattice. Actually, $A$ coincides with the subring of $\tR=\mZ^4$ consisting 
  of all quadruples $(z_1,z_2,z_3,z_4)$ such that $z_1\equiv z_2\equiv z_3\equiv z_4\!\pmod2$. 
    
   Let $\gM$ be the ideal of $A$ consisting of  all quadruples $(z_1,z_2,z_3,z_4)$ such that 
   $z_1\equiv z_2\equiv z_3\equiv z_4\equiv0\!\pmod2$. Then $\hat{\gM}=\rad\hA=\rad\htR$.
   So $\hA$ is a \emph{Backstr\"om order} in the sense of \cite{rinrog}. Therefore, according to \cite{rinrog}, $\hA$-lattices,
   hence also $A$-lattices, are classified by the representations of the quiver 
  \[
    \La=  \vcenter{\xymatrix@R=1ex{ & {++} \\ & {+-} \\ \bul \ar[uur] \ar[ur] \ar[dr] \ar[ddr] \\ &{-+} \\ &{--}	  }  }
  \]  
  Recall the corresponding construction (on the level of $A$-lattices). For any $K$-module $M$ let $M_{\al\be}$, where $\al,\be\in\{+,-\}$, be the submodule
  $\{u\in M\mid au=\al u,\,bu=\be u\}$. If $M$ is an $A$-lattice, set $M^\sharp=\tR M$ (the product of lattices inside $\mQ\*_\mZ M$, which coinsides with the
  image of the natural map $\tR\*_AM\to\mQ\*_\mZ M$). 
  It is the smallest $\tR$-module containing $M$. Then $M^\sharp=\bop_{\al\be}M^\sharp_{\al\be}$. Let $V_\bul=M/\gM M$ and
  $V_{\al\be}=M^\sharp_{\al\be}/2M^\sharp_{\al\be}$. Taking for $f_{\al\be}$  the natural maps $V_\bul\to V_{\al\be}$, 
  we obtain a representation of the quiver $\La$, which we denote by $\Phi(V)$. Thus we define a functor $\Phi$ from the category 
  $A\lat$ of $A$-lattices to the category $\rep\La$ of represenations of the quiver $\La$ over the field $\aK=\mZ/2\mZ$.  
  We have the following result analogous to that of \cite{rinrog}.
  
  \begin{theorem}\label{riro} 
   Let $\kR$ be the category of representations
  \begin{equation}\label{e01} 
     V=  \vcenter{\xymatrix@R=2ex@C=4em{ & V_{++} \\ & V_{+-} \\ 
    V_\bul \ar[uur]|{\,f_{++}\,} \ar[ur]|{\,f_{+-}\,} \ar[dr]|{\,f_{-+}\,} \ar[ddr]|{\,f_{--}\,} \\ & V_{-+} \\ & V_{--}	  }  }
 \end{equation}
 of the quiver $\La$ over $\aK$ such that all maps $f_{\al\be}$ are surjective and the induced map $f_\+:V_\bul\to V_\+=\bop_{\al\be}V_{\al\be}$ 
 is injective. The functor $\Phi$ is a representation equivalence $A\lat\to\kR$ such that all induced maps $\Phi_{M,N}:\Hom_A(M,N)\to\Hom_\La(\Phi(M),\Phi(N))$
 are surjective and 
 \begin{equation}\label{ker} 
 \ker\Phi_{M,N}=\Hom_A(M,\gM N)=2\Hom_{\tR}(\tM,\tN).
 \end{equation}
  \end{theorem}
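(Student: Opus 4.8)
The plan is to exploit the conductor square attached to the pair $A\sb\tR$ and to reduce the statement to the Bäckström machinery of \cite{rinrog}, adapted to the present datum. The first observation is that $\gM$ is exactly the conductor of $A$ in $\tR$: indeed $\gM=2\tR$, this is an $\tR$-ideal contained in $A$, and testing the condition $x\tR\sbe A$ on the idempotents of $\tR=\mZ^4$ forces every component of $x$ to be even. Passing to $2$-adic completions we have $\hat\gM=\rad\hA=\rad\htR$, so $\hA$ is Bäckström and $A/\gM\simeq\aK$ sits diagonally in $\tR/\gM\simeq\aK^4$; the separated representations of this square are precisely the representations of the quiver $\La$ with one source and four sinks. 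Thus the theory of \cite{rinrog} applied to this square already produces a representation equivalence onto representations of $\La$, and the work that remains is to pin down the image category $\kR$, to identify $\ker\Phi_{M,N}$, and to transfer everything from $\hA$ back to $A$.

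First I would check that $\Phi$ lands in $\kR$ and is dense. Since $\tM=\tR M$, applying the idempotent $e_{\al\be}$ of $\tR=\mZ^4$ gives $\tM_{\al\be}=e_{\al\be}M$ as a $\mZ$-module, so $u\mapsto e_{\al\be}u$ maps $M$ onto $\tM_{\al\be}$; reducing modulo $2$ and factoring through $V_\bul=M/\gM M$ shows each $f_{\al\be}$ is surjective. Because $\gM M=2\tR M=2\tM\sbe M$, the map $f_\+$ is the reduction of the inclusion $M\hookrightarrow\tM$ modulo $2\tM$, whose kernel $(M\cap2\tM)/2\tM$ is $0$ since $2\tM\sbe M$; hence $f_\+$ is injective. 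Conversely any $V\in\kR$ is realized by taking $\tM_{\al\be}$ free of rank $\dim_\aK V_{\al\be}$, $\tM=\bop_{\al\be}\tM_{\al\be}$, and $M=\pi^{-1}(\im f_\+)$, where $\pi:\tM\to\tM/2\tM=\bop_{\al\be}V_{\al\be}$; the surjectivity of the $f_{\al\be}$ forces $e_{\al\be}M=\tM_{\al\be}$ by Nakayama, hence $\tR M=\tM$ and $\Phi(M)\simeq V$.

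Next, the $\Hom$-computation. Every $g\in\Hom_A(M,N)$ extends uniquely to the $\tR$-morphism $g^\sharp:\tM\to\tN$ (the restriction of the induced map on rational envelopes), so $\Phi_{M,N}(g)=0$ iff $g(M)\sbe\gM N$, and since $\gM N=2\tN$ this already forces $g^\sharp(\tM)=\tR g(M)\sbe2\tN$; thus $\ker\Phi_{M,N}=\Hom_A(M,\gM N)$. The chain $\Hom_A(M,\gM N)=\Hom_A(M,2\tN)=2\Hom_A(M,\tN)=2\Hom_{\tR}(\tM,\tN)$, the last equality because any $A$-map $M\to\tN$ extends uniquely over $\tR M=\tM$, yields \eqref{ker}. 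For surjectivity of $\Phi_{M,N}$ I would lift a morphism $\phi=(\phi_\bul,(\phi_{\al\be}))$ of representations as follows: choose $\mZ$-linear lifts $h_{\al\be}:\tM_{\al\be}\to\tN_{\al\be}$ of the $\phi_{\al\be}$ and assemble the $\tR$-map $h=\bop_{\al\be}h_{\al\be}:\tM\to\tN$. The compatibility of $\phi$ with the $f_{\al\be}$ says exactly that $h$ carries $\im f_\+^M$ into $\im f_\+^N$ modulo $2$, i.e. $h(M)\sbe N$ since $N=\pi_N^{-1}(\im f_\+^N)$. Restricting $h$ to $M$ gives $g\in\Hom_A(M,N)$ with $\Phi(g)=\phi$, the injectivity of $f_\+^N$ forcing the $V_\bul$-component to be $\phi_\bul$.

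Finally, the genuinely delicate point is that $\ker\Phi_{M,N}$ is \emph{not} contained in the radical of the category over the non-complete ring $A$: an endomorphism congruent to the identity modulo $\gM$ need only have odd, rather than unit, determinant over $\mZ$. I would therefore carry out the equivalence over the complete order $\hA$, where $\hat\gM=\rad\hA$ and Nakayama's lemma makes any endomorphism congruent to the identity modulo $\hat\gM$ an automorphism; thus $\ker$ lies in the radical of the category of $\hA$-lattices, and the full, dense functor reflects isomorphisms and indecomposability. The passage back to $A\lat$ is then free: by the representation equivalence $M\mapsto\hM$ recalled in the introduction, together with the fact that reduction modulo $2$ factors through $2$-adic completion (so that $\Phi(M)$ depends only on $\hM$), the functor $\Phi$ on $A\lat$ is a composite of representation equivalences and hence one itself. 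I expect the lifting step establishing fullness and the correct handling of this completeness subtlety to be the main obstacles; the remaining points are the routine conductor-square bookkeeping sketched above.
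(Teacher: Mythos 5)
Your proposal is correct, and its working parts --- the verification that $\Phi(M)\in\kR$, the density construction taking the preimage of $\im f_\+$ in $\tM$, the component-wise lifting of a morphism of representations to an $\tR$-map $\tM\to\tN$ that automatically carries $M$ into $N$, and the identification $\ker\Phi_{M,N}=\Hom_A(M,\gM N)=2\Hom_{\tR}(\tM,\tN)$ --- coincide with the paper's proof. Where you genuinely differ is in how the representation-equivalence property itself is secured. You rightly observe that over the non-complete ring $A$ the kernel is not contained in the radical of the category (e.g. $1+2\,\mathrm{id}_A=3\,\mathrm{id}_A$ lies in $1+\ker\Phi_{A,A}$ but is not invertible in $\End_AA$), so the standard ``full, dense, kernel in the radical'' argument cannot be invoked integrally; your remedy is to run that argument over $\hA$, where $\hat{\gM}=\rad\hA$ and completeness makes $1+h$ an automorphism, and then transport the conclusion back through the representation equivalence $M\mapsto\hM$, using that $\Phi$ factors through $2$-adic completion. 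The paper instead stays over $\mZ$: it exhibits the inverse-on-objects $M(V)$ with $M(\Phi(M))\simeq M$, and the point that makes this (and injectivity on isomorphism classes) work integrally is that isomorphisms of representations lift to $\tR$-isomorphisms, because $\GL(n,\mZ)\to\GL(n,\aK)$ is surjective --- the ``smallness of the residue field $\aK$'' stressed in the footnote to Corollary~\ref{replift} and embodied in Lemma~\ref{lift}. Both routes close the same gap: yours is the more robust one (it does not depend on $\aK=\mZ/2\mZ$ and is essentially the B\"ackstr\"om argument of the complete case), while the paper's integral lifting is what additionally yields Corollaries~\ref{replift}--\ref{sublift}, which are needed later in the paper and which a completion detour does not give by itself. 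One small repair to your text: the appeal to Nakayama for $e_{\al\be}M=\tM_{\al\be}$ is not literally valid over $\mZ$ ($2$ is not in the Jacobson radical, and $Q=2Q$ does not force $Q=0$ for a finitely generated abelian group); but no lemma is needed, since $M\spe2\tM$ by construction, whence $e_{\al\be}M\spe2\tM_{\al\be}$ and therefore $e_{\al\be}M=e_{\al\be}M+2\tM_{\al\be}=\tM_{\al\be}$.
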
 
  \begin{proof}
  Obviously, always $\Phi(M)\in\kR$.  Let $V\in\kR$, $d_{\al\be}=\dim V_{\al\be}$ and $d_\bul=\dim V_\bul$.
   Denote by $\mZ_{\al\be}$ the $K$-module $\mZ$, where $a$ acts as $\al1$ 
 and $b$ acts as $\be1$. Thus $\tR=\bop_{\al\be}\mZ_{\al\be}$. 
 Set $\tM=\bop_{\al\be}\mZ_{\al\be}^{d_{\al\be}}$ and define $M(V)$ as the preimage of $\im f_\+$ in $\tM$ under the
 epimorphism $\tM\to\tM/2\tM\simeq V_\+$. Then $M(V)$ is an $A$-lattice such that $\Phi(M(V))\simeq V$
 and $M(V)^\sharp=\tM$. It is also evident that $M(\Phi(M))\simeq M$. Hence $\Phi$ is a representation equivalence.
 
   A morphism $\phi:V\to V'$, where
\[
     V'=  \vcenter{\xymatrix@R=2ex@C=4em{ & V'_{++} \\ & V'_{+-} \\ 
    V'_\bul \ar[uur]|{\,f'_{++}\,} \ar[ur]|{\,f'_{+-}\,} \ar[dr]|{\,f'_{-+}\,} \ar[ddr]|{\,f'_{--}\,} \\ & V'_{-+} \\ & V'_{--}	  }  }
\] 
is given by a quintuple of homomorphisms $\{\phi_\bul,\phi_{++},\phi_{+-},\phi_{-+},\phi_{--}\}$ such that 
$\phi_{\al\be}f_\bul=f'_{\al\be}\phi_\bul$ for all $\al,\be$. If $V=\Phi(M)$ and $V'=\Phi(N)$, these homomorphisms give a
homomorphism $\tilde{\phi}:\tM/2\tM\to\tN/2\tN$ such that $\tilde{\phi}f_\+=f'_\+\tilde{\phi}$. If we lift $\tilde{\phi}$ to a
homomorphism $\psi^\sharp:\tM\to\tN$, it implies that $\psi^\sharp(M)\sbe N$, so we obtain a homomorphism $\psi:M\to N$
such that $\Phi(\psi)=\phi$. Obviously, $\Phi(\psi)=0$ \iff $\im\psi\sbe\gM N=2\tN$.
  \end{proof}
    
   We call the quintuple $(d_\bul,d_{\al\be})\ (\al,\be\in\{+,-\})$ the \emph{dimension} of the representation $V$ or of the corresponding lattice $M=M(V)$, 
   denote it by $\Dim V$ or $\Dim M$ and usually present it in the form
 \[
   \rp{d_\bul}{d_{++}}{d_{+-}}{d_{-+}}{d_{--}}
 \]
 We also denote, if necessary, $d_\bul=d_\bul(M),\,d_{\al\be}=d_{\al\be}(M)$ and $d_\+=d_\+(M)=\sum_{\al\be}d_{\al\be}(M)$.
 Note that the rank of $M$ as of $\mZ$-module equals $d_\+(M)$, while $d_\bul=\dim_{\aK}M/\gM M$.
  
 We also need analogues of some results from \cite{rogback}. For this purpose we establish a lemma.
  For any $\mZ$-lattice or $\tR$-lattice $M$ we set $\oM=M/2M$ and if $\al:M\to N$ is a homomorphism of lattices, 
  we denote by $\oal$ the induced map $\oM\to\oN$. 

\begin{lemma}\label{lift} 
 Any exact sequence $0\to\oM\xarr{\oal}\oN\xarr{\obe}\oL\to0$ of $\mZ$-lattices or of $\tR$-lattices can be lifted to an exact sequence  
 $0\to M\xarr{\al} N\xarr{\be}L\to0$.
\end{lemma}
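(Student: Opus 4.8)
The plan is to reduce everything to the case of $\mZ$-lattices and there to exploit that the reductions modulo $2$ are semisimple. First I would dispose of the $\tR$-case: since $\tR=\mZ^4$ is a product of four copies of $\mZ$, every $\tR$-lattice is canonically a direct sum $\bop_{\al\be}M_{\al\be}$ of $\mZ$-lattices, and $\tR/2\tR\simeq\aK^4$ is a product of fields, hence semisimple. Therefore a short exact sequence $0\to\oM\to\oN\to\oL\to0$ of $\tR/2\tR$-modules decomposes into four exact sequences $0\to\oM_{\al\be}\to\oN_{\al\be}\to\oL_{\al\be}\to0$ of $\aK$-vector spaces, one in each component; lifting each componentwise and reassembling gives the required $\tR$-linear lifting. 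Thus it suffices to treat $\mZ$-lattices, where $\oM,\oN,\oL$ are finite-dimensional $\aK$-vector spaces.

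For $\mZ$-lattices I would first construct the surjection $\be$. Put $m=\dim\oM$ and $l=\dim\oL$. Since $0\to\oM\xarr{\oal}\oN\xarr{\obe}\oL\to0$ is a sequence of $\aK$-vector spaces it splits, so I may choose a basis of $\oN$ consisting of a basis $\bar u_1,\dots,\bar u_m$ of $\ker\obe=\im\oal$ together with elements $\bar v_1,\dots,\bar v_l$ whose images form a basis of $\oL$. Lifting these by Nakayama to a $\mZ$-basis $u_1,\dots,u_m,v_1,\dots,v_l$ of $N$, and choosing a $\mZ$-basis $g_1,\dots,g_l$ of $L$ reducing to $\obe(\bar v_1),\dots,\obe(\bar v_l)$, I define $\be$ by $\be(u_i)=0$ and $\be(v_j)=g_j$. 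By construction $\be$ is surjective, reduces to $\obe$, and its kernel $\ker\be=\gnr{u_1,\dots,u_m}$ is a free direct summand of $N$ of rank $m$ satisfying $\overline{\ker\be}=\ker\obe=\im\oal$.

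It remains to produce the injection $\al$ with $\im\al=\ker\be$, and this is the step I expect to be the main obstacle. Here $M$ and $\ker\be$ are free $\mZ$-modules of the same rank $m$, and $\oal$ is an isomorphism $\oM\ito\overline{\ker\be}$, but a naively chosen lift of $\oal$ need not be an isomorphism — its cokernel could be a nonzero finite group of odd order — so the exactness $\im\al=\ker\be$ is not automatic. I would overcome this using the surjectivity of the reduction map $\GL_m(\mZ)\to\GL_m(\aK)$, which holds because $\GL_m(\aK)=\SL_m(\aK)$ and $\SL_m(\mZ)\to\SL_m(\mZ/2\mZ)$ is onto. Writing $\oal$ in the chosen bases as a matrix in $\GL_m(\aK)$, lifting it to some $C\in\GL_m(\mZ)$, and setting $\al(e_k)=\sum_i C_{ik}u_i$ for a basis $e_1,\dots,e_m$ of $M$, I obtain $\al:M\to N$ that is injective, reduces to $\oal$, and has $\im\al=\gnr{u_1,\dots,u_m}=\ker\be$ (since $C$ is invertible over $\mZ$). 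Then $0\to M\xarr{\al}N\xarr{\be}L\to0$ is exact and lifts the given sequence, which completes the argument.
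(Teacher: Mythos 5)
Your reduction to $\mZ$-lattices and your construction of $\al$ are sound, but there is a genuine error at the point where you build the adapted basis of $N$: you cannot ``lift by Nakayama'' a basis of $\oN=N/2N$ to a $\mZ$-basis of $N$. Nakayama's lemma needs the ideal to lie in the Jacobson radical, and $2\mZ\not\sbe\rad\mZ=0$; worse, the underlying claim is simply false for arbitrary lifts: the element $3\in\mZ$ reduces to a basis of $\mZ/2\mZ$ but generates only the proper sublattice $3\mZ$. In general your lifts $u_1,\dots,u_m,v_1,\dots,v_l$ generate a sublattice $N'\sbe N$ of odd index which may be proper, and then the prescription $\be(u_i)=0$, $\be(v_j)=g_j$ defines a homomorphism only on $N'$, not on $N$ (and it need not extend to $N$ at all, as the example $N=\mZ$, $N'=3\mZ$, $\phi(3)=1$ shows). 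The same issue is hidden in the phrase ``choosing a $\mZ$-basis $g_1,\dots,g_l$ of $L$ reducing to $\obe(\bar v_1),\dots,\obe(\bar v_l)$'': such a basis does exist, but its existence is exactly the nontrivial point and cannot be obtained by lifting the elements one at a time.

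Both gaps are repaired by the very tool you already invoke for $\al$, namely the surjectivity of $\GL(n,\mZ)\to\GL(n,\aK)$: fix any $\mZ$-basis of $N$, note that the transition matrix from its reduction to your adapted basis of $\oN$ lies in $\GL(n,\aK)$, lift that matrix to $\GL(n,\mZ)$, and apply it to the chosen basis; the result is an honest $\mZ$-basis of $N$ with the prescribed reduction (and similarly for $L$). With this correction your argument closes, and it then becomes essentially the paper's proof: the paper never lifts elements at all, but writes $\oal,\obe$ as matrices in fixed bases, normalizes them as $\oS^{-1}\oal\oT=\smtr{I\\0}$ and $\obe\oS=\mtr{0&\oC}$ with $\oS,\oT,\oC$ invertible over $\aK$, and lifts \emph{only the invertible matrices} $\oS,\oT,\oC$ through $\GL(n,\mZ)\to\GL(n,\aK)$, setting $\al=S\smtr{I\\0}T^{-1}$ and $\be=\mtr{0&C}S^{-1}$ --- precisely so that every lifting step is of the one kind that is legitimate over $\mZ$.
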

\begin{proof}
 As $\tR=\mZ^4$, we only have to consider the case of $\mZ$-lattices.
 We choose bases in $M,N,L$ and the corresponing bases in $\oM,\oN,\oL$ and identify $\oal$ and $\obe$ with their matrices \wrt these bases.
 There are invertible matrices $\oS,\oT$  of appropriate sizes over the field $\aK$ such that $\oS^{-1}\oal \oT=\smtr{I\\0}$, where $I$ is the unit matrix.
 Then $\obe\oS=\mtr{0&\oC}$ for some invertible matrix $\oC$. Since the maps $\GL(n,\mZ)\to\GL(n,\aK)$ are surjective, we can lift $\oS,\oT,\oC$ to
 invertible matrices $S,T,C$ over $\mZ$. Then the homomorphisms $M\to N$ and $N\to L$  respectively given by the matrices $\al=S\smtr{I\\0}T^{-1}$
  and $\be=\mtr{0& C}S^{-1}$ are the necessary liftings of $\oal$ and $\obe$.
\end{proof}

 \begin{corol} \label{replift}
  Let $0\to V'\xarr{\oal} V\xarr{\obe} V''\to 0$ be an exact sequence of representations from $\kR$. It can be lifted to an exact sequence
  of $A$-lattices $0\to M(V')\xarr{\al} M(V)\xarr{\be} M(V'')\to 0$.\!%
   \footnote{\,This result does not follow directly from \cite[Lem.\,4]{rogback}, where the case of algebras over comlete discrete
 valuation rings is considered. Moreover, it highly depends on Lemma~\ref{lift}, hence on the ``smallness'' of the residue field $\aK$.}
 \end{corol}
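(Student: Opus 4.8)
The plan is to lift the sequence first on the level of the overlattices $M(V)^\sharp$ and then to cut it down to the lattices $M(V)$ themselves. Since a short exact sequence in $\kR$ is exact at each vertex, I would begin by recording the two $\aK$-linear exact sequences it produces: the central one $0\to V'_\bul\xarr{\oal_\bul}V_\bul\xarr{\obe_\bul}V''_\bul\to0$, and, after applying $\bop_{\al\be}(-)$, the peripheral one $0\to V'_\+\xarr{\oal_\+}V_\+\xarr{\obe_\+}V''_\+\to0$, where $\oal_\+=\bop_{\al\be}\oal_{\al\be}$ and $\obe_\+=\bop_{\al\be}\obe_{\al\be}$. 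These are tied by the structure maps through the morphism relations $\oal_\+f'_\+=f_\+\oal_\bul$ and $\obe_\+f_\+=f''_\+\obe_\bul$. From the construction of $M(V)$ I recall that $V_\+\simeq M(V)^\sharp/2M(V)^\sharp$, that the \emph{injective} map $f_\+$ identifies $V_\bul$ with the subspace $W:=\im f_\+\sbe V_\+$, and that $M(V)=\pi^{-1}(W)$ for the reduction $\pi\colon M(V)^\sharp\to M(V)^\sharp/2M(V)^\sharp$ (with $2M(V)^\sharp\sbe M(V)$). I set up $W'=\im f'_\+$ and $W''=\im f''_\+$ analogously.

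Next I would lift the peripheral sequence. Because $V_\+$ is the reduction modulo $2$ of the $\tR$-lattice $M(V)^\sharp$ and the maps $\oal_\+,\obe_\+$ respect the $\al\be$-components, hence are $\tR$-linear, Lemma~\ref{lift} in its $\tR$-lattice form lifts $0\to V'_\+\to V_\+\to V''_\+\to0$ to an exact sequence of $\tR$-lattices $0\to M(V')^\sharp\xarr{\tilde\al}M(V)^\sharp\xarr{\tilde\be}M(V'')^\sharp\to0$ whose reduction modulo $2$ is $\oal_\+,\obe_\+$. Here I use that an $\tR$-lattice is determined up to isomorphism by the ranks of its $\al\be$-components; since these are prescribed by $\dim V'_{\al\be}$ and $\dim V''_{\al\be}$, the lifting lattices are forced to be exactly the overlattices $M(V')^\sharp$ and $M(V'')^\sharp$ attached to $V'$ and $V''$. (This is the step that leans on the smallness of $\aK$, via the surjectivity of $\GL(n,\mZ)\to\GL(n,\aK)$ used in Lemma~\ref{lift}.)

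I would then check that $\tilde\al,\tilde\be$ carry the actual lattices into one another. From $\oal_\+f'_\+=f_\+\oal_\bul$ one gets $\oal_\+(W')\sbe W$, and from $\obe_\+f_\+=f''_\+\obe_\bul$ together with surjectivity of $\obe_\bul$ one gets $\obe_\+(W)=W''$. Reducing modulo $2$: for $x\in M(V')=(\pi')^{-1}(W')$ the element $\tilde\al(x)$ reduces into $\oal_\+(W')\sbe W$, so $\tilde\al(x)\in M(V)$; likewise $\tilde\be(M(V))\sbe M(V'')$. Write $\al,\be$ for the restrictions. Injectivity of $\al$ and the relation $\be\al=0$ are inherited from $\tilde\al,\tilde\be$. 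For surjectivity of $\be$, given $y\in M(V'')$ I first pick $x\in M(V)^\sharp$ with $\tilde\be(x)=y$; then $\obe_\+(\pi(x))=\pi''(y)\in W''$, and since $\obe_\+\colon W\to W''$ is onto I choose $w\in W$ with $\obe_\+(w)=\obe_\+(\pi(x))$, so that $\pi(x)-w\in\ker\obe_\+=\im\oal_\+$; lifting a preimage through $\pi'$ and subtracting the corresponding $\tilde\al$-image corrects $x$ to an $x_0\in\pi^{-1}(W)=M(V)$ with $\tilde\be(x_0)=y$.

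The step I expect to be the main obstacle is exactness in the middle, $\ker\be\sbe\im\al$. Given $x\in M(V)$ with $\tilde\be(x)=0$, I can write $x=\tilde\al(z)$ with $z\in M(V')^\sharp$, and the problem is to show $z\in M(V')$, i.e. $\pi'(z)\in W'$. Since $\pi(x)=\oal_\+(\pi'(z))\in W$, this reduces to the purely representation-theoretic claim $\oal_\+^{-1}(W)=W'$, which is \emph{not} formal: it is exactly where injectivity of the structure maps $f_\+$ and exactness at the central vertex $\bul$ are both used. I would prove it by a diagram chase: if $\oal_\+(v)=f_\+(m)\in W$, then $f''_\+\obe_\bul(m)=\obe_\+f_\+(m)=\obe_\+\oal_\+(v)=0$, so injectivity of $f''_\+$ forces $\obe_\bul(m)=0$, whence $m=\oal_\bul(m')$ by exactness at $\bul$; then $\oal_\+(v)=f_\+\oal_\bul(m')=\oal_\+f'_\+(m')$, and injectivity of $\oal_\+$ gives $v=f'_\+(m')\in W'$. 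Together with the earlier steps this produces the required short exact sequence of $A$-lattices $0\to M(V')\xarr{\al}M(V)\xarr{\be}M(V'')\to0$, and in passing shows $\Dim M(V)=\Dim M(V')+\Dim M(V'')$.
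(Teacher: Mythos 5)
Correct, and essentially the paper's own argument: you lift the peripheral row $0\to V'_\+\to V_\+\to V''_\+\to0$ to an exact sequence of $\tR$-lattices $0\to M(V')^\sharp\to M(V)^\sharp\to M(V'')^\sharp\to0$ via Lemma~\ref{lift}, then restrict to the sublattices $M(V)\sbe M(V)^\sharp$ using commutativity with the structure maps, exactly as the paper does. The only (harmless) divergence is at the end: the paper concludes exactness of the restricted sequence by citing the $3\times3$ lemma, whereas you verify it by explicit diagram chases---your middle-exactness chase is precisely the place where the paper's $3\times3$ argument uses exactness of $0\to V'_\bul\to V_\bul\to V''_\bul\to0$ at the central vertex.
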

 \begin{proof}
 We denote $\tV=\bop_{\al\be}V_{\al\be}$. Then we have a commutative diagram
 \begin{equation}\label{ex1} 
  \vcenter{  \xymatrix{    0 \ar[r] & V_\bul' \ar[r]^{\tilde{\al}} \ar[d]_{f'_{V'}} &	  V_\bul \ar[r]^{\tilde{\be}}  \ar[d]_{f_{V}} &	 V_\bul'' \ar[r]  \ar[d]_{f''_{V''}} &	0 \\
   					 0 \ar[r] & \tV' \ar[r]^{\tilde{\al}} &	  \tV \ar[r]^{\tilde{\be}} &	 \tV'' \ar[r] &	0 } 
  }
 \end{equation}
 By Lemma~\ref{lift}, the second row can be lifted to an exact sequence 
 \[
  0\to M(V')^\sharp\xarr{\al^\sharp} M(V)^\sharp \xarr{\be^\sharp} M(V'')^\sharp\to 0. 
 \]
 Recall that $M(V)\spe 2M(V)^\sharp$ and $M(V)/2M(V)^\sharp\simeq V$.
 Since the diagram \eqref{ex1} is commutative, it implies that $\al^\sharp(M(V'))\sbe M(V)$ and $\be^\sharp(M(V))\sbe M(V'')$.
 So we obtain the necassary lifting 
 		$$0\to M(V')\xarr{\al} M(V)\xarr{\be} M(V'')\to 0.$$ 
 It is exact by the $3\xx3$ lemma,
 \end{proof}
  
    We say that a monomorphism of lattices $\phi:M\to N$ is \emph{pure} if $\cok\phi$ is torsion free (hence, also a lattice).
    
\begin{corol}\label{isolift} 
 Every epimorphism (monomorphism, isomorphism)  $V\to V'$ of representations from $\kR$ can be lifted to an epimorphism 
 (respectively, pure monomorphism, isomorphism) of $A$-lattices $\Phi(V)\to\Phi(V')$.
 \end{corol}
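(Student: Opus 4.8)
The plan is to deduce all three cases from Corollary~\ref{replift}, by exhibiting the given morphism $\phi\colon V\to V'$ as one of the two nonzero maps of a short exact sequence of representations whose three terms all lie in $\kR$, and then reading off the required property of its lift.

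The isomorphism case is immediate: since the zero representation belongs to $\kR$, an isomorphism $\phi$ sits in the exact sequence $0\to V\xarr{\phi}V'\to0\to0$ of $\kR$, and Corollary~\ref{replift} lifts this to an exact sequence $0\to M(V)\to M(V')\to0\to0$ of $A$-lattices, that is, to an isomorphism $M(V)\to M(V')$ whose image under $\Phi$ is $\phi$.

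For a monomorphism I would take $V''=\cok\phi$ in $\rep\La$, obtaining $0\to V\xarr{\phi}V'\to V''\to0$; for an epimorphism I would take $V''=\ker\phi$, obtaining $0\to V''\to V\xarr{\phi}V'\to0$. Granting that $V''$ again lies in $\kR$, Corollary~\ref{replift} lifts the sequence to an exact sequence of $A$-lattices lifting $\phi$. In the epimorphic case the lifted map $M(V)\to M(V')$ is then surjective by construction. In the monomorphic case the lifted map $M(V)\to M(V')$ has cokernel $M(V'')$, which is an $A$-lattice and hence torsion free, so the monomorphism is pure, as required.

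The one point with real content — and the step I expect to be the main obstacle — is to show that the kernel, respectively cokernel, formed in the abelian category $\rep\La$ really falls back into $\kR$; that is, that $\phi$ is admissible for the exact structure of $\kR$ and not merely injective, respectively surjective, at each vertex. For $V''=\cok\phi$ one must check that the induced map $V''_\bul\to\bop_{\al\be}V''_{\al\be}$ stays injective, and for $V''=\ker\phi$ that each induced leaf map $V''_\bul\to V''_{\al\be}$ stays surjective; these are precisely the defining conditions of $\kR$, and they are not formal consequences of the corresponding properties of $V$ and $V'$. In the epimorphic case this can be bypassed altogether: lift the reduction $\bop_{\al\be}\phi_{\al\be}$, which is onto modulo $2$, to a split epimorphism $M(V)^\sharp\to M(V')^\sharp$ of $\tR$-lattices (using the lifting of surjective matrices as in Lemma~\ref{lift}) and restrict it to $M(V)$; its image contains $2M(V')^\sharp$, and modulo $2M(V')^\sharp$ the surjectivity of $\phi_\bul$ gives $\phi_\+(V_\bul)=V'_\bul$, so the restriction is onto $M(V')$. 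The monomorphic direction is the genuinely delicate one, because for a morphism that is injective at each vertex but not admissible no pure lift exists at all, so purity really does force $\cok\phi$ to lie in $\kR$.
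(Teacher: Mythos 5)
Your reduction to Corollary~\ref{replift} is exactly the paper's route: the corollary is stated immediately after \ref{replift} with no separate argument, the intended proof being precisely to regard the given map as one of the two maps of a short exact sequence in $\kR$ and to lift that sequence, purity in the monomorphism case coming, as you say, from torsion-freeness of the cokernel lattice $M(V'')$. So, read as a statement about maps occurring in exact sequences of $\kR$ (admissible monomorphisms and epimorphisms) --- which is the only way the corollary is used later, in Corollary~\ref{sublift} and Theorem~\ref{ptubes} --- your proof is complete and coincides with the paper's.

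The obstacle you single out is real, and both of your supplementary claims are correct; this puts your write-up on firmer ground than the paper's own formulation. First, kernels of vertex-wise epimorphisms can indeed leave $\kR$: if $V$ is the representation \eqref{tuberep} with $dm=2$ and $U\sb V$ is the subrepresentation concentrated at the leaf $(++)$ spanned by a single vector, one checks that $V/U\in\kR$ while $U\notin\kR$; so under the vertex-wise reading your bypass is not a luxury but a necessity, and it is correct (the key points being $\psi^\sharp\bigl(2M(V)^\sharp\bigr)=2M(V')^\sharp$ by surjectivity of $\psi^\sharp$, the equality $f'_\+\phi_\bul(V_\bul)=\im f'_\+$ from surjectivity of $\phi_\bul$, and $\Phi(\psi)=\phi$, which uses injectivity of $f'_\+$ as in the proof of Theorem~\ref{riro}). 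Second, for monomorphisms the literal statement is in fact false, confirming your closing remark: take $V'$ to be \eqref{tuberep} with $d=2$, $m=1$, $F=\smtr{0&1\\1&1}$, write elements of $V'_\bul=\aK^4$ as pairs $(x,y)$ with $x,y\in\aK^2$, and let $V\sb V'$ have $V_\bul=\setsuch{(x,y)}{x_1+y_1+y_2=0}$, a hyperplane containing none of the four subspaces $\ker f'_{\al\be}$, with $V_{\al\be}=f'_{\al\be}(V_\bul)=V'_{\al\be}$. Then $V,V'\in\kR$, the inclusion is a monomorphism of representations whose cokernel is concentrated at the central vertex (hence not in $\kR$), and $\Dim V=\rp32222$ while $\Dim V'=\rp42222$. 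Both lattices have $\mZ$-rank $8$, so a pure monomorphism $M(V)\to M(V')$ would have a torsion-free cokernel of rank $0$ and hence be an isomorphism --- impossible, since $d_\bul(M(V))=3\ne 4=d_\bul(M(V'))$. Thus no pure lift exists at all; purity really does force $\cok\phi\in\kR$, and the corollary must carry the admissible reading above, under which your argument settles it.
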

 
\begin{corol}\label{sublift} 
  Given a chain of subrepresentations 
  \[
     V=V_0\sp V_1\sp V_2\sp \dots\sp V_{m-1}\sp V_m=0,
  \]
  there is a chain of sublattices in $M=M(V)$
    \[
     M=M_0\sp M_1\sp M_2\sp \dots\sp M_{m-1}\sp M_m=0
  \]
  such that $M_k\simeq M(V_k)$ and $M_k/M_{k+l}\simeq M(V_k/V_{k+l})$ for all possible values of $k,l$.
 \end{corol}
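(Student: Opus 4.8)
The plan is to induct on the length $m$, the cases $m\le1$ being trivial. For the inductive step I single out the smallest nonzero term $V_{m-1}$ and lift, by Corollary~\ref{replift}, the short exact sequence $0\to V_{m-1}\to V\to V/V_{m-1}\to0$ to a pure exact sequence of $A$-lattices
\[
 0\to M(V_{m-1})\to M\xarr{\pi}M(V/V_{m-1})\to0,
\]
so that $\ker\pi=M(V_{m-1})$ and $\Phi(\pi)$ is the quotient map $g\colon V\to V/V_{m-1}$. The induced flag $\{V_k/V_{m-1}\}_{k=0}^{m-1}$ in $V/V_{m-1}$ has length $m-1$, so the induction hypothesis yields a flag $M(V/V_{m-1})=N_0\sp N_1\sp\dots\sp N_{m-1}=0$ with $N_k\simeq M(V_k/V_{m-1})$ and $N_k/N_l\simeq M(V_k/V_l)$. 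I then put $M_k=\pi^{-1}(N_k)$ for $k\le m-1$ and $M_m=0$; these are nested sublattices of $M$ with $M_0=M$ and $M_{m-1}=\ker\pi=M(V_{m-1})$.

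All quotients $M_k/M_l$ with $l\le m-1$ are then immediate: since $\pi^{-1}(N_l)\spe\ker\pi$, the map $\pi$ induces $M_k/M_l\simeq N_k/N_l\simeq M(V_k/V_l)$. The one remaining identification, which is also the main obstacle, is $M_k\simeq M(V_k)$ (this subsumes the quotients $M_k/M_m=M_k$, as $V_k/V_m=V_k$). To obtain it I will prove the auxiliary statement that $\Phi$ takes every pure exact sequence of $A$-lattices $0\to P\to N\to N/P\to0$ to a short exact sequence in $\kR$. For the components at the vertices $\al\be$ this is easy: $0\to P^\sharp\to N^\sharp\to(N/P)^\sharp\to0$ is exact with $\mZ$-free terms (as $\tR=\mZ^4$), hence stays exact modulo $2$, exactly as in Lemma~\ref{lift}. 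The component at the central vertex $\bul$ rests on the identity $P\cap2N^\sharp=2P^\sharp$, which holds because $P^\sharp=N^\sharp\cap(\mQ\*_\mZ P)$ is saturated in $N^\sharp$ and because $\gM N=2N^\sharp$; this is precisely where purity is used, and it fails for non-saturated sublattices such as $\gM\sb A$.

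Granting this, the restriction $\pi\colon M_k\to N_k$ is a pure epimorphism (its image $N_k$ is a lattice), so $0\to\ker\pi\to M_k\to N_k\to0$ is pure and $\Phi$ turns it into a short exact sequence. Strengthening the induction hypothesis to record that $\Phi$ sends the inclusion $N_k\hookrightarrow N_0$ to the inclusion $V_k/V_{m-1}\hookrightarrow V/V_{m-1}$, I compare this sequence with $0\to\ker\pi\to M\to N_0\to0$: applying the now-exact functor $\Phi$ gives a commutative diagram of short exact sequences in $\kR$ with $\Phi(\ker\pi)=\ker g=V_{m-1}$ and $\Phi(N_k)=V_k/V_{m-1}$, and the five lemma forces $\Phi(M_k)$ to be the preimage $g^{-1}(V_k/V_{m-1})=V_k$ inside $V$. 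Since $\Phi$ is a representation equivalence with $M(\Phi(M_k))\simeq M_k$, this gives $M_k\simeq M(V_k)$ and at the same time propagates the strengthened hypothesis to the next stage, closing the induction.
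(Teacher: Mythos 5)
Your induction scheme is workable, and you correctly isolated the crux: for the preimages $M_k=\pi^{-1}(N_k)$ the identification $M_k\simeq M(V_k)$ does not come for free, since an extension of lattices is not determined by its ends. The genuine gap is the auxiliary lemma on which you hang this identification: it is \emph{false} that $\Phi$ carries every pure exact sequence of $A$-lattices to an exact sequence in $\kR$, and both identities you derive from purity fail. Identify $\tR=\bop_{\al\be}\mZ_{\al\be}$ and take $N=A$, $P=A\cap(\mQ\*_\mZ\mZ_{++})=2\mZ_{++}$, i.e. the quadruples $(2z,0,0,0)$. Then $N/P$ is torsion free, so the sequence $0\to P\to A\to A/P\to 0$ is pure (and $P$ is even saturated in $A$); yet $P^\sharp=\tR P=2\mZ_{++}$, while $N^\sharp\cap(\mQ\*_\mZ P)=\tR\cap(\mQ\*_\mZ\mZ_{++})=\mZ_{++}$, so your identity $P^\sharp=N^\sharp\cap(\mQ\*_\mZ P)$ fails, the sequence $0\to P^\sharp\to N^\sharp\to(N/P)^\sharp\to0$ is not exact, and $P\cap 2N^\sharp=P\ne 2P^\sharp$. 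Concretely, $P\sbe\gM=\gM A$, so by \eqref{ker} the inclusion $P\hookrightarrow A$ is annihilated by $\Phi$ although $\Phi(P)\ne0$; equivalently, the central components of $\Phi(P),\Phi(A),\Phi(A/P)$ are all one-dimensional, which already rules out exactness. The moral is that purity, $N\cap(\mQ\*_\mZ P)=P$, does not survive multiplication by $\tR$; what your argument needs is the strictly stronger condition $N^\sharp\cap(\mQ\*_\mZ P)=P^\sharp$, i.e. exactness already at the $\sharp$ level.

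The gap is repairable, because the sequences you actually feed into the lemma do satisfy this $\sharp$-saturation — for a reason your proof never records. Corollary~\ref{replift} produces $0\to P\to M\xarr{\pi}N_0\to0$ by restricting an exact sequence of free $\tR$-modules, so $M^\sharp\cap(\mQ\*_\mZ P)=P^\sharp$ holds for it by construction, and it is inherited by preimages: $P^\sharp\sbe M_k^\sharp\cap(\mQ\*_\mZ P)\sbe M^\sharp\cap(\mQ\*_\mZ P)=P^\sharp$. Once $0\to P^\sharp\to M_k^\sharp\to N_k^\sharp\to0$ is known to be exact, your treatment of the two kinds of vertices goes through (for the central one, use $\gM M_k=2M_k^\sharp\sbe M_k$, $\pi(M_k^\sharp)=N_k^\sharp$ and $M_k\cap(\mQ\*_\mZ P)=P$), and the five-lemma step closes the induction. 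So you must carry ``the lifted sequence is exact at the $\sharp$ level'' through the induction in place of ``pure''. Note that the paper states the corollary without proof; the intended argument simply reruns the proof of Corollary~\ref{replift} for the whole flag at once — choose bases of the spaces $V_{\al\be}$ adapted to the flags $(V_k)_{\al\be}$, lift them to $\mZ$-bases of the $M^\sharp_{\al\be}$ using the surjectivity of $\GL(n,\mZ)\to\GL(n,\aK)$ as in Lemma~\ref{lift}, and define each $M_k$ as the preimage of $f_\+((V_k)_\bul)$ in the resulting $\tR$-sublattice — which builds the $\sharp$-level exactness of all subquotients in from the start and avoids the issue you ran into.
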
  
 
 \section{Regular lattices}
 \label{s2}
 
 Recall the structure of the \AR\ quiver $\kQ$ of the category $\hA\lat$ \cite{plak1}. According to \cite{rogback}, it is obtained from
 the \AR\ quiver of the category $\rep\La$ by gluing the preprojective and the preinjective components into one component.
 The resulting \emph{preprojective-preinjective component} is shown at Figure~1. 
 Here all lattices are uniquely determined by their dimensions. 
 The \AR\ transpose $\tau$ of the category $\hR\lat$ acts on this component as the shift to the left. Note that, 
 for every $A$-lattice $M$, $\tau\hM\simeq \Om \hM$, the syzygy of $\hM$ as of $\hR$-module  \cite[Prop.\,1.1]{dpl2}. Hence,
 $\Om M\simeq N$ if $\tau\hM=\hat N$. 
 
    \begin{sidewaysfigure}[p] 
  \vspace*{4.5in}
   {\footnotesize
    $$
\xymatrix@C=1em@R=1.5em{
& \txt{$\rp41222$} \ar[ddr] && \txt{$\rp32111$} \ar[ddr] && \txt{$\rp20111$}\ar[ddr] && \txt{$\rp11000$} \ar[ddr]&& 
\txt{$\rp10111$} \ar[ddr] && \txt{$\rp22111$}\ar[ddr] && \txt{$\rp31222$} \ar[ddr]&& \\
&  \txt{$\rp42122$} \ar[dr] && \txt{$\rp31211$} \ar[dr] && \txt{$\rp21011$}\ar[dr] && \txt{$\rp10100$} \ar[dr] &&
  \txt{$\rp11011$} \ar[dr] && \txt{$\rp21211$}\ar[dr] && \txt{$\rp32122$} \ar[dr] &&\\
 \cdots\ar[uur]\ar[ur]\ar[dr]\ar[ddr] &&  \txt{$\rp73333$} \ar[uur]\ar[ur]\ar[dr]\ar[ddr] && \txt{$\rp52222$} \ar[uur]\ar[ur]\ar[dr]\ar[ddr] 
 && \txt{$\rp31111$} \ar[uur]\ar[ur]\ar[dr]\ar[ddr] 
  && \txt{$\rp11111$} \ar[uur]\ar[ur]\ar[dr]\ar[ddr] && \txt{$\rp32222$} \ar[uur]\ar[ur]\ar[dr]\ar[ddr] && \txt{$\rp53333$} \ar[uur]\ar[ur]\ar[dr]\ar[ddr] 
  && \txt{$\rp74444$} \ar[uur]\ar[ur]\ar[dr]\ar[ddr] &\cdots \\
 & \txt{$\rp42212$} \ar[ur] && \txt{$\rp31121$} \ar[ur] && \txt{$\rp21101$}\ar[ur] && \txt{$\rp10010$} \ar[ur] && 
 \txt{$\rp11101$} \ar[ur] && \txt{$\rp21121$}\ar[ur] && \txt{$\rp32212$} \ar[ur] && \\
  &\txt{$\rp42221$} \ar[uur] && \txt{$\rp31112$} \ar[uur] && \txt{$\rp21110$} \ar[uur] && \txt{$\rp10001$} \ar[uur] && 
  \txt{$\rp11110$} \ar[uur] && \txt{$\rp21112$} \ar[uur] && \txt{$\rp32221$} \ar[uur] && } 
 $$}
 \vskip1cm
   \caption{Preprojective-preinjective component}
   \end{sidewaysfigure}
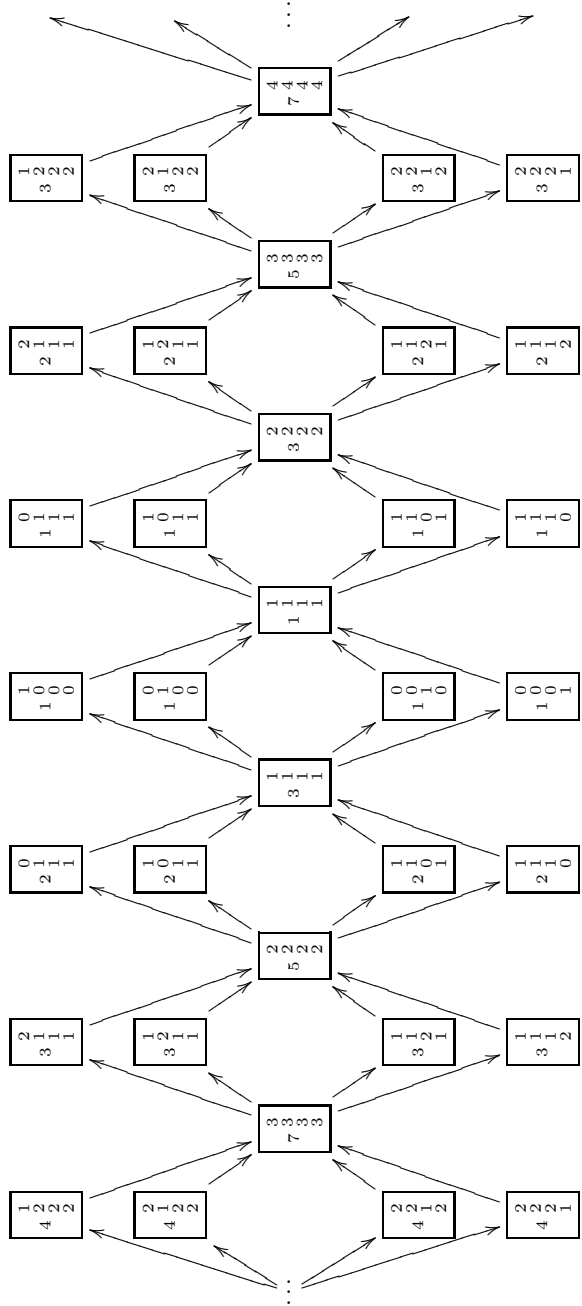 
   
 The other components, called \emph{regular},  are \emph{tubes}. They are parametrized by the closed points of the projective line
 $\mP^1_\aK$ (considered as a scheme), which consist of monic irreducible polynomials from $\aK[t]$ and the symbol $\8$. 
 We denote the tube corresponding to the polynomial $f(t)$ by $\kT^f$ and
  the tube corresponding to $\8$ by $\kT^\8$. We also write $\kT^0$ instead of $\kT^t$ and $\kT^1$ instead of $\kT^{t-1}$. When describing tubes,
  we substitute $A$-lattices for their completions and say that $T$ belongs to a tube $\kT^f$ if $\hat{T}$ belongs to this tube. Then we call $T$ a
  \emph{regular $K$-lattice}. We call a $K$-lattice $M$ \emph{regular} if all its indecomposable direct summands are regular.
  
  All tubes except $\kT^\la$ ($\la\in\{0,1,\8\}$) are \emph{homogeneous}, i.e. $\tau M\simeq M$ for all $M$ from this tube. They are of the form
  \begin{equation}\label{htube} 
      \xymatrix{ T^f_1 \ar@/^/[r]  & T^f_2    \ar@/^/[r] \ar@/^/[l] & T^f_3  \ar@/^/[r] \ar@/^/[l] & \cdots \ar@/^/[l]} 
  \end{equation}
   $\Dim T^f_m=\rp{2dm}{dm}{dm}{dm}{dm}$, where $d=\deg f(t)$. Actually, $T^f_m=M(\oT^f_m)$, where $\oT^f_m$ is the following representation 
   of the quiver $\La$:
  \vspace*{-2ex}
  \begin{equation}\label{tuberep} 
  	 \xymatrix@R=1em{ && {\aK^{dm}} \\ && {\aK^{dm}} \\ {\aK^{2dm}} \ar[uurr]|{\smtr{I&0}} \ar[urr]|{\smtr{0&I}} \ar[drr]|{\smtr{I&I}} \ar[ddrr]|{\smtr{I&F}}
	\\  && {\aK^{dm}} \\ && {\aK^{dm}} 	  }
  \end{equation}
  Here $I$ is the $dm\xx dm$ unit matrix and $F$ is the Frobenius matrix with the characteristic polynomial $f(t)^m$. 
  
     The tube $\kT^\la$ for $\la\in\{0,1,\8\}$ is of the form
 \begin{equation}\label{stube} 
  \vcenter{ \xymatrix{ T^{\la1}_1 \ar[r]  & T^{\la1}_2 \ar[r] \ar[dl] & T^{\la1}_3 \ar[r] \ar[dl]& T^{\la1}_4 \ar[r] \ar[dl] \ar[r] \ar[dl] & \cdots \ar[dl]\\
   T^{\la2}_1 \ar[r]  & T^{\la2}_2 \ar[r] \ar[ul] & T^{\la2}_3 \ar[r] \ar[ul]& T^{\la2}_4 \ar[r] \ar[ul] \ar[r] \ar[ul] & \cdots \ar[ul]} }
 \end{equation}
  Here  $\tau T^{\la1}_n\simeq T^{\la2}_n$ and $\tau T^{\la2}_n\simeq T^{\la1}_n$. For $\la=1$ we have
  \begin{align*}
  & \Dim T^{1j}_{2m}=\rp{2m}mmmm \ \text{ for both $j=1$ and $j=2$},\\
   & \Dim T^{11}_{2m-1}=\rp{2m-1}mm{m-1}{m-1},\\
   & \Dim T^{12}_{2m-1}=\rp{2m-1}{m-1}{m-1}mm.   
  \end{align*}
  Actually, $T^{11}_{2m}=M(\oT^{11}_{2m})$, where $\oT^{11}_{2m}$ is of the form \eqref{tuberep} with $d=1$ and $F=J_1$ being the 
   Jordan $m\xx m$ matrix with eigenvalue $1$. 
  $T^{11}_{2m-1}=M(\oT^{11}_{2m-1})$, where $\oT^{11}_{2m-1}$ is of the form
  \[
 	 \xymatrix@R=1em{ && {\aK^{m}} \\ && {\aK^{m}} \\ {\aK^{2m-1}} \ar[uurr]|{f_1} \ar[urr]|{f_2} \ar[drr]|{f_3} \ar[ddrr]|{f_4}
	\\  && {\aK^{m-1}} \\ && {\aK^{m-1}} 	  }   
  \]
  Here
  \begin{align*}
   f_1&=\mtr{I&0&0\\0&0&1},\\
   f_2&=\mtr{0&I&0\\0&0&1},\\
   f_3&=\mtr{I&I&0},\\
   f_4&=\mtr{I&J_1&\mathrm{e}},\ \text{with } \mathrm{e}=\smtr{0\\\vdots\\0\\1},
  \end{align*}
  where the matrices $I$ and $J_1$ are of size $(m-1)\xx(m-1)$
  The lattices $T^{12}_n$ are obtained from $T^{11}_n$ by the permutations, respectively, of $f_1$ with $f_3$ and of $f_2$ with $f_4$.
  
  The tube $\kT^0$ ($\kT^\8$) is obtained from the tube $\kT^1$ by the permutation of $f_2$ with $f_4$ (respectively, of $f_2$ with $f_3$).
  
   Note that an indecomposable lattice $M$ belongs to a tube \iff  
  \begin{align}  
  &\textstyle   2d_\bul(M)=\sum_{\al\be}d_{\al\be}(M). \label{dtube} \\[-1ex]
  \intertext{  In this case }
  &  d_\bul(\Om M)=d_\bul(M)\ \text{ and }\ d_{\al\be}(\Om M)=d_\bul(M)-d_{\al\be}(M).  \label{dtube1} 
    \end{align}     
   
    The structure of the representations of quivers belonging to tubes is described in \cite{DR,DF} (see also \cite[Thm.\,31\,and\,36]{quivers}). Together with
  Corollary~\ref{sublift} it gives the following result for lattices. From now on, writing $T^f_m$, we always suppose that $f\notin\{t,t-1\}$, and writing $T^{\la j}_m$,
  we always suppose that $\la\in\{0,1,\8\}$ and $\la\in\{1,2\}$.
  
 \begin{theorem}\label{ptubes} 
 Every module $T^f_m$ or $T^{\la j}_m$  has a chain of submodules 
  \begin{equation}\label{chain} 
   M=M_0\sp M_1\sp M_2\sp\dots\sp M_{m-1}\sp M_m=\{0\}
  \end{equation} 
  such that
  \begin{enumerate}
  \item 
 $
   M_k/M_{k+l}\simeq
   \begin{cases}
    T^f_l &\text{if } M=T^f_m,\\
    T^{\la j}_l &\text{if } M=T^{\la j}_m \text{ and $k$ is even},\\
    T^{\la i}_l, &\text{where $i\ne j$, if } M=T^{\la j}_m \text{ and $k$ is odd.}    
   \end{cases} 
 $

 \smallskip 
 \item  The maps $T^f_m\to T^f_{m+1}$ and $T^{\la j}_m\to T^{\la j}_{m+1}$ in the diagrams, respectively, \eqref{htube} and \eqref{stube} 
 can be chosen injective, with the quotients, respectively, $T^f_1$  and $T^{\la j}_1$.
   \smallskip
  \item  The maps $T^f_{m+1}\to T^f_m$ and $T^{\la i}_{m+1}\to T^{\la j}_m\ (i\ne j)$  in the diagrams, respectively, \eqref{htube} and 
  \eqref{stube} can be chosen surjective, with the kernels, respectively, $T^f_1$ and $T^{\la i}_1$. 
  \smallskip
  \item  If $M$ and $M'$ belong to different tubes, then $\im\vi\sbe2N$ for every homomorphism $\vi:M\to N$.
   \end{enumerate}
 \end{theorem}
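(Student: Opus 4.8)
The plan is to prove all four assertions by transporting the well-understood structure of the regular representations of $\La$ across the representation equivalence $\Phi$, using the lifting results Corollaries~\ref{sublift}, \ref{isolift} and \ref{replift}. The underlying graph of $\La$ is the star $\tilde D_4$, so $\aK\La$ is tame hereditary and its regular indecomposables are exactly those lying in the tubes; the three exceptional tubes have rank $2$ and all others rank $1$, which is the classical combinatorics of \cite{DR,DF,quivers}. In particular, each indecomposable regular representation of quasi-length $m$ in a tube of rank $r$ ($r=1$ for $\kT^f$, $r=2$ for $\kT^\la$) is uniserial in the regular category, with a unique chain of regular subrepresentations $R=R_0\sp R_1\sp\dots\sp R_m=0$ whose successive regular-simple factors cycle with period $r$; moreover all the $R_k$ and all the quotients $R_k/R_{k+l}$ are again regular, hence again lie in $\kR$.

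For part (1) I would take, for $M=T^f_m$ or $M=T^{\la j}_m$, the representation $V=\oT^f_m$ resp.\ $\oT^{\la j}_m$ and its regular composition chain as above, with $R_k$ the regular subrepresentation of quasi-length $m-k$. Reading off the factors $R_k/R_{k+l}$ from the shape of the tube (rank $1$: always $\oT^f_l$; rank $2$: $\oT^{\la j}_l$ for $k$ even and $\oT^{\la i}_l$ with $i\ne j$ for $k$ odd) and applying Corollary~\ref{sublift} produces a chain of sublattices $M=M_0\sp\dots\sp M_m=0$ with $M_k\simeq M(R_k)$ and $M_k/M_{k+l}\simeq M(R_k/R_{k+l})$, which is precisely the asserted list of isomorphism types.

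For parts (2) and (3) I would use that, inside a tube, the maps displayed in \eqref{htube} and \eqref{stube} are the irreducible morphisms of $\rep\La$: the upward map $R_m\to R_{m+1}$ is a monomorphism whose cokernel is the regular simple, and the downward map $R_{m+1}\to R_m$ is an epimorphism whose kernel is the regular simple (in the rank-$2$ case the relevant simple is $\oT^{\la j}_1$ for the inclusion and $\oT^{\la i}_1$ for the surjection, matching the parity bookkeeping). By Corollary~\ref{isolift} each representation monomorphism lifts to a \emph{pure} monomorphism and each epimorphism to an epimorphism of $A$-lattices between the corresponding $T$'s; applying Corollary~\ref{replift} to the resulting short exact sequence of representations identifies the cokernel, resp.\ kernel, of the lifted map with $M$ of the representation cokernel, resp.\ kernel, i.e.\ with the claimed regular-simple lattice $T^f_1$, $T^{\la j}_1$ or $T^{\la i}_1$.

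Finally, part (4) is where the real content sits. For $M,N$ in different tubes the representations $\Phi(M),\Phi(N)$ lie in different tubes of $\rep\La$, and for a tame hereditary algebra distinct tubes are orthogonal, so $\Hom_\La(\Phi(M),\Phi(N))=0$. Since $\Phi_{M,N}$ is surjective by Theorem~\ref{riro}, every $\vi\colon M\to N$ then lies in $\ker\Phi_{M,N}$, and \eqref{ker} identifies this kernel as $\Hom_A(M,\gM N)$, so that $\im\vi\sbe\gM N$, the containment claimed in (4). The step I expect to be the main obstacle is exactly this last one: one must invoke (or, in the present integral setting, re-derive) the orthogonality of distinct tubes and then read off the conclusion from the explicit kernel formula \eqref{ker}, since the bare statement $\Phi(\vi)=0$ only locates $\vi$ in the kernel and it is \eqref{ker} that supplies the factor of $2$ and the precise target. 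The remaining delicate points, in (1)--(3), are purely bookkeeping: checking that every subrepresentation and quotient that occurs genuinely lies in $\kR$ (so that Corollaries~\ref{sublift}--\ref{replift} apply) and that the period-$2$ alternation of the two regular simples in the special tubes $\kT^\la$ is tracked with the correct indices $i\ne j$.
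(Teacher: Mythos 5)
Your handling of parts (1)--(3) is correct and is exactly the paper's own route: the paper offers nothing beyond citing \cite{DR,DF} (and \cite{quivers}) for the structure of the tubes in $\rep\La$ and invoking Corollary~\ref{sublift}; your elaboration via Corollaries~\ref{replift} and \ref{isolift} to get the injective/surjective choices in (2)--(3), and the parity bookkeeping in the rank-two tubes, is the intended argument.

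Part (4), however, contains a genuine flaw, located exactly at the step you single out as the main content. From orthogonality of distinct tubes and surjectivity of $\Phi_{M,N}$ you correctly get $\vi\in\ker\Phi_{M,N}$, and \eqref{ker} gives $\im\vi\sbe\gM N$. But $\gM N$ is \emph{not} ``the containment claimed in (4)'': by the paper's own identification (end of the proof of Theorem~\ref{riro}), $\gM N=2\tN$, and for an indecomposable $N$ lying in a tube one always has $N\subsetneq\tN$ (if $N=\tN$ then $N$ is an $\tR$-lattice, and nonzero $\tR$-lattices fail \eqref{dtube}), hence $2N\subsetneq\gM N$. So your argument proves only $\im\vi\sbe2\tN$, which is strictly weaker than the asserted $\im\vi\sbe2N$. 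Moreover, no argument can close this gap, because the assertion with $2N$ is false as stated: take $M=T^{01}_1\in\kT^0$, realized as $\setsuch{(z,z')\in\mZ_{++}\+\mZ_{--}}{z\equiv z'\!\pmod 2}$, and $N=T^{11}_1\in\kT^1$, realized as $\setsuch{(w,w')\in\mZ_{++}\+\mZ_{+-}}{w\equiv w'\!\pmod 2}$. Then $\vi(z,z')=(2z,0)$ is a homomorphism of $K$-lattices $M\to N$ (it is the restriction of an $\tR$-linear map, and its image lands in $N$), yet $(2,0)\in\im\vi$ does not lie in $2N$, since $(1,0)\notin N$; of course $\im\vi\sbe2\tN$, as your argument predicts. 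The correct, provable form of (4) is $\im\vi\sbe\gM N=2\tN$, equivalently $\Phi(\vi)=0$; you should record this as a necessary correction to the statement (it is what the later arguments of the paper actually use, via \eqref{ker}) rather than silently identifying $\gM N$ with $2N$, which for tube lattices is never an equality.
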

 
   One can aslo get a description of endomorphisms of indecomposable lattices belonging to tubes.

  For any commutative ring $K$ and any monic polynomial $f(t)$ of degree $d$ there is a natural embedding 
  $\eps^f_m:K[t]/(f(t)^m)\to\Mat(dm,K)$ arising from the regular representation of this quotient algebra over $K$. 
  We denote by $K^f_m$ the image of the embedding $K[t]/(f(t)^m)\to\Mat(dm,K)^4$
  such that all its components are $\eps^f_m$. We also denote by $K^t_{m*}$ the image of the embedding
  $K[t]/(t^m)\to\Mat(m,K)^2\xx\Mat(m-1,K)^2$ such that the first two components are $\eps^t_m$ and the other two
  components are the compositions $\eps^t_{m-1}\pi_m$, where $\pi_m$ is the surjection $K[t]/(t^m)\to K[t]/(t^{m-1})$.
  Certainly, if $m=1$, it is just the diagonal embedding $K\to K\xx K$.
  
 If $M$ is an $A$-lattice and $\Dim M=\rp{d_\bul}{d_{++}}{d_{+-}}{d_{-+}}{d_{--}}$,  the endomorphism ring $\End_AM$
 naturally embeds into $\End_{\tR}\tM=\prod_{\al\be}\Mat(d_{\al\be},\mZ)$ and we identify it with the image of this embedding.
 
  \begin{theorem}\label{etubes} 
   For an irreducible monic polynomial $f(t)\in\aK[t]$ choose a monic polynomial $\Tf(t)\in\mZ[t]$ such that 
   $f(t)=\Tf(t)\!\pmod2$.
   \begin{enumerate}
   \item 
   $\End_AT^f_m=\mZ^{\Tf}_m+2\Mat(dm,\mZ)^4,$
   where $d=\deg f(t).$
   
   \item 
  $\End_AT^{\la j}_{2m}=\mZ^t_m+2\Mat(m,\mZ)^4$  and \\
    $\End_AT^{\la j}_{2m-1}=\mZ^t_{m*}+2\big(\Mat(m,K)^2\xx\Mat(m-1,K)^2\big).$
   \end{enumerate}
  \emph{Obviously, it does not depend on the choice of $\Tf(t)$.} 
  \end{theorem}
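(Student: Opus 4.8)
The plan is to reduce the whole computation to the representation category by means of Theorem~\ref{riro} and then to lift the answer along reduction modulo $2$. By that theorem $\Phi_{M,M}$ is surjective, and by \eqref{ker} with $N=M$ its kernel is $2\End_{\tR}\tM=2\prod_{\al\be}\Mat(d_{\al\be},\mZ)$, which is exactly the kernel of the reduction homomorphism $\rho\colon\prod_{\al\be}\Mat(d_{\al\be},\mZ)\to\prod_{\al\be}\Mat(d_{\al\be},\aK)$. Under the embedding $\End_AM\sbe\End_{\tR}\tM$ recalled before the statement, the $V_{\al\be}$-components of $\Phi(\psi)$ are the reductions modulo $2$ of the components of $\psi^\sharp$; since $f_\+$ is injective these components determine $\phi_\bul$, so $\End_\La\Phi(M)$ may be regarded as a subalgebra of $\prod_{\al\be}\Mat(d_{\al\be},\aK)$ and $\rho(\End_AM)=\End_\La\Phi(M)$. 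As $\End_AM\spe\ker\rho$, this gives
\[
  \End_AM=\rho^{-1}\big(\End_\La\Phi(M)\big).
\]
Hence it suffices to compute $\End_\La\Phi(M)$ as a subalgebra of $\prod_{\al\be}\Mat(d_{\al\be},\aK)$ and to describe its preimage; in each case the summand $2\Mat(\,\cdot\,)$ in the statement is precisely $\ker\rho$.

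I would handle $T^f_m$ and $T^{\la j}_{2m}$ together, as both have $\Phi(M)$ of the form \eqref{tuberep}. Splitting $\phi_\bul$ along $V_\bul=\aK^{dm}\+\aK^{dm}$, the relations $\phi_{\al\be}f_{\al\be}=f_{\al\be}\phi_\bul$ for $f_{++}=\smtr{I&0}$ and $f_{+-}=\smtr{0&I}$ force $\phi_\bul=\smtr{P&0\\0&S}$ with $\phi_{++}=P$ and $\phi_{+-}=S$; the relation for $f_{-+}=\smtr{I&I}$ yields $P=S=\phi_{-+}$, and the one for $f_{--}=\smtr{I&F}$ gives $\phi_{--}=P$ together with $PF=FP$. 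Since $F$ is non-derogatory — a companion matrix of $f(t)^m$ in the homogeneous case and a single Jordan block $J_1$ in the special case — its centralizer is the polynomial algebra $\aK[F]$, the image of $\eps^f_m$, so $\End_\La\Phi(M)$ is the diagonal copy $\aK^f_m$. For the special even tubes $\aK[J_1]=\aK[J_1-I]$ with $J_1-I$ nilpotent and cyclic, hence conjugate over $\mZ$ to the companion matrix $\eps^t_m(t)$ of $t^m$; conjugating the basis of $\tM$ by the corresponding unimodular matrix (which leaves the isomorphism type of the lattice unchanged) turns the centralizer into the diagonal copy of $\aK^t_m$. This also explains why $t$ occurs uniformly for every $\la\in\{0,1,\8\}$: the permutations of the maps $f_{\al\be}$ that distinguish the three special tubes do not affect the centralizer. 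Finally $\rho^{-1}$ is read off by choosing, for each $g$, the lift $\eps^{\Tf}_m(h)$ of $\eps^f_m(g)$ (using $\Tf\equiv f\pmod2$, whence $\eps^{\Tf}_m$ reduces to $\eps^f_m$); any preimage then differs from an element of $\mZ^{\Tf}_m$ by a matrix in $2\Mat(dm,\mZ)^4$. This proves part (1) and the even half of (2), and independence of $\Tf$ is immediate, since two lifts are congruent modulo $2$ and so yield the same sum with $2\Mat(dm,\mZ)^4$.

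The remaining and most laborious case is $T^{\la j}_{2m-1}$, where $\Phi(M)$ has the unequal dimensions $\rp{2m-1}mm{m-1}{m-1}$ and the explicit maps $f_1,\dots,f_4$, the last carrying the Jordan block $J_1$ and the column $\mathrm{e}$. I would again solve $\phi_{\al\be}f_{\al\be}=f_{\al\be}\phi_\bul$ directly, now splitting $V_\bul=\aK^{m-1}\+\aK^{m-1}\+\aK$ according to the block structure of the $f_i$. The relations attached to the size-$m$ maps $f_1,f_2$ should pin $\phi_{++}$ and $\phi_{+-}$ to a common element of the size-$m$ nilpotent polynomial algebra, while those attached to $f_3,f_4$ — which fold the last coordinate into the size-$(m-1)$ spaces and involve $J_1$ and $\mathrm{e}$ — should force $\phi_{-+}$ and $\phi_{--}$ to equal the truncation $\eps^t_{m-1}(\pi_m(\,\cdot\,))$ of that same polynomial. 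This identifies $\End_\La\Phi(M)$ with $\aK^t_{m*}$, and the lifting argument of the previous paragraph then gives $\End_AT^{\la j}_{2m-1}=\mZ^t_{m*}+2\big(\Mat(m,\mZ)^2\xx\Mat(m-1,\mZ)^2\big)$. I expect this last step — tracking precisely how $f_3$ and $f_4$ couple the two size-$(m-1)$ components to the shared size-$m$ polynomial through the extra coordinate $\mathrm{e}$, and checking that no further endomorphisms survive — to be the genuine obstacle; the even cases and the lifting step are routine once this structure is established.
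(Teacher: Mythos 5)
Your overall strategy is exactly the paper's: the paper's entire proof consists of asserting that $\End_\La \oT^f_m=\aK^f_m$, $\End_\La \oT^{\la j}_{2m}=\aK^t_m$ and $\End_\La \oT^{\la j}_{2m-1}=\aK^t_{m*}$ are ``well known (and can be verified by straightforward calculations)'' and then applying Theorem~\ref{riro}; your identity $\End_AM=\rho^{-1}\big(\End_\La\Phi(M)\big)$ is precisely that application, and your centralizer computations for $T^f_m$ and $T^{\la j}_{2m}$ (including the remark that conjugating $J_1+I$ into the companion matrix of $t^m$ handles the basis convention) correctly supply the verification the paper omits.

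The one genuine gap is the case you flag yourself, $T^{\la j}_{2m-1}$, which you leave at the level of ``should pin'' and ``should force'', and your predicted distribution of constraints is not quite right: $f_1$ and $f_2$ alone do \emph{not} identify $\phi_{++}$ with $\phi_{+-}$. Writing $\phi_\bul$ in $\big((m-1)+(m-1)+1\big)$-block form $\smtr{A&B&u\\C&D&v\\r&s&c}$, the relation for $f_1$ gives $B=0$, $s=0$, $\phi_{++}=\smtr{A&u\\r&c}$, and the relation for $f_2$ gives $C=0$, $r=0$, $\phi_{+-}=\smtr{D&v\\0&c}$, so at this stage the two share only the scalar corner $c$; it is $f_3$ that forces $\phi_{-+}=A=D$ and $u=v$ (characteristic $2$), and $f_4$ that forces $\phi_{--}=A$, $AJ_1=J_1A$, together with the coupling $(A+cI)\mathrm{e}=Nu$, where $N=J_1+I$ is the nilpotent single Jordan block of size $m-1$. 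This coupling is where the content lies: with the convention $Ne_k=e_{k-1}$, $\mathrm{e}=e_{m-1}$, writing $A=\sum_k a_kN^k$, membership of $(A+cI)\mathrm{e}$ in $\im N$ forces $c=a_0$, and then $u$ is determined modulo the one-dimensional $\ker N$, so the algebra has dimension $m$. It contains the element $\tau$ given by $A=N$, $c=0$, $u=\mathrm{e}$, whose $m\xx m$ components $\smtr{N&\mathrm{e}\\0&0}$ are exactly the nilpotent Jordan block of size $m$; hence $\tau^{m-1}\ne0=\tau^m$, so the whole algebra is $\aK[\tau]\simeq\aK[t]/(t^m)$, embedded in $\Mat(m,\aK)^2\xx\Mat(m-1,\aK)^2$ precisely as $\aK^t_{m*}$, and your lifting step finishes the proof. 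Be warned that the computation is sensitive to how $J_1$ is paired with $\mathrm{e}$: if $N$ is taken with ones on the other off-diagonal while $\mathrm{e}$ stays the last coordinate vector, the constraint on $c$ disappears, the endomorphism algebra acquires a nontrivial idempotent, and the representation as written decomposes -- so your suspicion that this case is the genuine obstacle, and not entirely mechanical, is justified.
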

  \begin{proof}
   It is well known (and can be verified by straightforward calculations) that $\End_\La \oT^f_m=\aK^f_m$, 
   while then $\End_\La \oT^{\la j}_{2m}=\aK^t_m$ and $\End_\La \oT^{\la j}_{2m-1}=\aK^t_{m*}.$
   It remains to apply Theorem~\ref{riro}.
  \end{proof}

  \section{Cohomologies}
 \label{s3} 
 
   We will give an explicit description of the
  cohomologies $H^n(K,M)$ and $H^n(K,DM)$ for $n>0$ and regular $K$-lattices $M$. Obviously, we only have to calculate them
  for indecomposable lattices.
  
  It follows from \cite{dpl1} that a free resolution $\bP$ for the trivial $K$-module $\mZ$ can be chosen as follows: $\bP_n$ is the set of homogeneous
  polynomials of degree $n$ from $R[x,y]$ and
  \[
   d(x^ky^l)=(a+(-1)^k)x^{k-1}y^l +(-1)^k(b+(-1)^l)x^ky^{l-1}.  
  \]
  So an $n$-cocycle $\ga$ is given by the values $\ga(x^my^{n-m})\ (0\le m\le n)$.
 
    Let $M$ be an indecomposable regular lattice. Set
    \label{ksi}
 \[
 M(n)=
 \begin{cases}
  M_{++} &\text{if $n$ is even},\\
  M_{-+}  &\text{if $n$ is odd and } M\notin \kT^\8,\\
  M_{+-}  &\text{if $n$ is odd and } M\in \kT^\8,\\  
 \end{cases}
 \]
 We define a homomorphism $\xi:M(n)\to H^n(K,M)\ (n>0)$. It sends an element $v\in M(n)$
 to the class of the cocycle $\xi_v$ which is defined as follows.
 
\begin{itemize}
\item   If $n$ is even, $M\notin \kT^\8$ and $v\in M_{++}$,
 $$ \xi_v(x^my^{n-m})=
 \begin{cases}
  v & \text{ if } m=n,\\
  0 & \text{ otherwise}.
 \end{cases}$$ 
 
\item   If $n$ is even, $M\in \kT^\8$ and $v\in M_{++}$,
 $$ \xi_v(x^my^{n-m})=
 \begin{cases}
  v & \text{ if } m=0,\\
  0 & \text{ otherwise}.
 \end{cases}$$ 

\item   If $n$ is odd, $M\notin \kT^\8$ and $v\in M_{-+}$,
 $$  \xi_v(x^my^{n-m})=
 \begin{cases}
  v & \text{ if } m=n,\\
  0 & \text{ otherwise}.
 \end{cases}$$ 
 
\item   If $n$ is odd, $M\in \kT^\8$ and $v\in M_{+-}$,
 $$  \xi_v(x^my^{n-m})=
 \begin{cases}
  v & \text{ if } m=0, \\
  0 & \text{ otherwise}.
 \end{cases}$$  
\end{itemize}
One easily verifies that $\xi_v$ is indeed a cocycle.
 
 \begin{theorem}\label{HnZ} 
  For every indecomposable regular $K$-lattice $M$ and every $n>0$ the map $\xi$ induces an isomorphism $\oxi:M(n)/2M(n)\ito H^n(K,M)$.
 \end{theorem}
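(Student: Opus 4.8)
The plan is to work directly with the cochain complex $C^\bullet=\Hom_K(\bP_\bullet,M)$, writing an $n$-cochain as a tuple $\gamma=(\gamma_0,\dots,\gamma_n)$ with $\gamma_m=\gamma(x^my^{n-m})\in M$; from the given formula for $d$ the coboundary is $(\delta\gamma)(x^ky^{n+1-k})=(a+(-1)^k)\gamma_{k-1}+(-1)^k(b+(-1)^{n+1-k})\gamma_k$, with the convention $\gamma_{-1}=\gamma_{n+1}=0$. The first task is the two formal points. That $\xi_v$ is a cocycle is immediate from $av=\pm v$, $bv=\pm v$ for $v\in M(n)$ (as already noted in the text). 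For well-definedness of $\oxi$ on $M(n)/2M(n)$ it suffices to exhibit $\xi_{2v}$ as a coboundary: for $n$ even and $v\in M_{++}$ the cochain $\beta$ supported in the single position $n-1$ by $\beta_{n-1}=v$ gives $\delta\beta=(0,\dots,0,(a+1)v)=(0,\dots,0,2v)=\xi_{2v}$, since $(a+1)v=2v$ and $(b-1)v=0$; the other parities and the case $M\in\kT^\8$ are handled by the same one-line construction with $v$ placed in the relevant corner.

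It then remains to prove $\oxi$ bijective. Since $M(n)/2M(n)$ has $\aK$-dimension equal to $d_{++}$, $d_{-+}$ or $d_{+-}$ according to the case, and the results of \cite{dpl2} on the cohomology of regular lattices provide the matching abstract value of $\dim_\aK H^n(K,M)$, it is enough to establish injectivity of $\oxi$: that $\xi_v=\delta\beta$ forces $v\in2M(n)$. The extreme cocycle equations already constrain the target: for $n$ even the relation $\delta\beta=\xi_v$ in position $n$ reads $v=(a+1)\beta_{n-1}$, so $v/2=\pi^+(\beta_{n-1})$, where $\pi^+=\tfrac12(a+1)$ projects $\tM$ onto its $a$-fixed part $M^\sharp_{++}\oplus M^\sharp_{+-}$; since $v$ is $b$-fixed and $b$ commutes with $\pi^+$, the element $v/2$ automatically lies in $M^\sharp_{++}$. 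The point that genuinely needs to be proved is that $v/2$ lies in $M$ itself, i.e. $v\in2M$.

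This last step is the main obstacle, and it is exactly where regularity must enter: for a general lattice $\pi^+(M)\not\subseteq M$, and it is only the balanced gluing of a tube lattice, encoded in $2d_\bul(M)=\sum_{\al\be}d_{\al\be}(M)$ of \eqref{dtube} and realised by the explicit representation $M=M(V)$, that makes the intermediate equations $(a+(-1)^m)\beta_{m-1}=-(-1)^m(b+(-1)^{n-m})\beta_m$ propagate the constraint down the tuple and force $\pi^+(\beta_{n-1})\in M$. I expect the cleanest way to organise this is an induction on the tube level $m$ along the exact sequences $0\to T^f_1\to T^f_m\to T^f_{m-1}\to0$ (and their $\la$-analogues) furnished by Theorem~\ref{ptubes}: these are natural for $\xi$, they stay exact after the eigenspace functor $(-)_{\al\be}$ since the maps may be chosen pure, and the base case $m=1$ is a direct finite computation with the small representations \eqref{tuberep}. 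Tracking $\xi$ through the resulting long exact cohomology sequence, with \eqref{dtube1} matching dimensions at each stage, then shows the connecting maps vanish and gives injectivity; the left shift $\Om$ on the tube serves to pass between the even and odd cases, interchanging $M_{++}$ with $M_{-+}$ or $M_{+-}$ as in the definition of $M(n)$.
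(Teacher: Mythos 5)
Your architecture is exactly the paper's: reduce to injectivity of $\oxi$ via the identification $M(n)/2M(n)\simeq H^n(K,M)$ from \cite{dpl2}, settle the quasi-simple lattices $T^f_1$, $T^{\la j}_1$ by a direct cochain computation, then induct along the exact sequences $0\to T^f_1\to T^f_m\to T^f_{m-1}\to 0$ of Theorem~\ref{ptubes}, killing the connecting homomorphisms and chasing the resulting commutative diagram. (Your dimension-count argument for the vanishing of the connecting maps is sound, and in fact bypasses the paper's Lemma~\ref{l1}, which gets the same exactness by dimension shifting along syzygies.) But the two load-bearing steps are gaps. The base case is not ``a direct finite computation'': $m=1$ fixes a lattice of rank $2$, but $n$ is arbitrary, and what must be shown is that $\xi_v=\delta\beta$ is impossible for $v\notin 2M(n)$ by propagating a parity constraint through all $n$ entries of $\beta$. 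That propagation \emph{is} the proof: for $M=T^{11}_1\sbe\mZ_{++}\+\mZ_{--}$ and $v=(2,0)$, writing $\beta_k=(z_k,z'_k)$, the equation at $x^n$ forces $z_0$ odd, the intermediate equations force every $z_k$ odd, and then $\delta\beta(y^n)=(2z_{n-1},0)\ne0=\xi_v(y^n)$ gives the contradiction. Your setup (the extreme equation $v=(a+1)\beta_{n-1}$, hence $v/2=\pi^+(\beta_{n-1})\in M^\sharp_{++}$, with $v/2\in M$ to be proved) is correct, but you assert the propagation rather than prove it, and it is the heart of the theorem.

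Second, your justification of the exactness of the top row $0\to M'(n)\to M(n)\to M''(n)\to 0$ is wrong: purity of the monomorphism does not make the eigenspace functor $(-)_{\al\be}$ right exact, because the cokernel of invariants is an $H^1$-type obstruction that purity cannot see. Concretely, $0\to\mZ_{--}\to T^{11}_1\to\mZ_{++}\to0$ (projection of $T^{11}_1\sbe\mZ_{++}\+\mZ_{--}$ onto its $++$ coordinate) is a pure exact sequence of $K$-lattices, yet on $(++)$-eigenspaces it gives $0\to0\to2\mZ\to\mZ\to0$, which is not exact on the right. What makes the claim true for the tube sequences is regularity itself: from the representations \eqref{tuberep} one checks that $\im f_\+\cap V_{\al\be}=0$, hence $M_{\al\be}=2M^\sharp_{\al\be}$ for every regular indecomposable $M$; since $M\epi M''$ induces a componentwise surjection $M^\sharp_{\al\be}\epi(M'')^\sharp_{\al\be}$ of $\tR$-modules, the map $M(n)\to M''(n)$ is onto. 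So every statement you need is true and the route is the paper's, but the base case and the top-row exactness both still require proofs, the latter by an argument genuinely different from the one you give.
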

 
 For the proof we use the following results.
 
\begin{lemma}\label{l1} 
 If $0\to M'\to M\to M''\to 0$ is an exact sequence of regular $K$-lattices, then the induced sequences
 \begin{align}
  & 0\to \Om M'\to \Om M\to \Om M''\to 0, \label{om} \\\ 
   &0\to \Om^{-1} M'\to \Om^{-1} M\to \Om^{-1} M''\to 0 \label{iom} 
 \end{align}
 are also exact.
\end{lemma}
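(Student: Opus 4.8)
The plan is to carry everything over to the category of $\hR$-lattices, where, by the representation equivalence $M\mapsto\hM$, the functor $\Om$ is the genuine syzygy, $\Om\hM=\ker(P\epi\hM)$ for a projective cover $P\epi\hM$. The key structural fact I would use is that $\hR=\mZ_2K$ is a \emph{local} ring, since $K$ is a finite $2$-group; hence projective covers exist, any projective mapping onto a lattice splits off its projective cover, and $\Om$ is well defined. Because completion is exact and, at the harmless primes $p\ne2$, the relevant extensions vanish, exactness of a sequence of lattices may be tested after completing at $2$; so it suffices to produce, from the completed sequence $0\to\hat{M'}\to\hM\to\hat{M''}\to0$, short exact sequences of the induced (co)syzygies, after which \eqref{om} and \eqref{iom} follow. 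The two cases will be handled by the horseshoe lemma and by duality.

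For \eqref{om} I would apply the horseshoe lemma. Choosing projective covers $P'\epi\hat{M'}$ and $P''\epi\hat{M''}$ and lifting $P''\to\hM$, one obtains an epimorphism $P'\+P''\epi\hM$ inside a commutative diagram with exact rows and columns, hence an exact sequence $0\to\Om\hat{M'}\to W\to\Om\hat{M''}\to0$ with $W=\ker(P'\+P''\epi\hM)$. Since $\hR$ is local, $P'\+P''$ splits off the projective cover of $\hM$, so $W\simeq\Om\hM\+Q$ with $Q$ free. It remains to show $Q=0$, and this is where the tube structure enters: combining \eqref{dtube} and \eqref{dtube1} gives, for every regular lattice $N$,
\[
 \rk_\mZ\Om N=\sum_{\al\be}\big(d_\bul(N)-d_{\al\be}(N)\big)=4d_\bul(N)-d_\+(N)=2d_\bul(N)=d_\+(N)=\rk_\mZ N.
\]
Applying this to $M',M,M''$ and using additivity of $\mZ$-rank on the horseshoe sequence yields $\rk Q=\rk\Om M'+\rk\Om M''-\rk\Om M=\rk M'+\rk M''-\rk M=0$, so $Q=0$ and \eqref{om} is exact.

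For \eqref{iom} I would dualize. The functor $D=\Hom_{\mZ_2}(-,\mZ_2)$, equipped with the contragredient $K$-action, is an exact self-duality of $\hR$-lattices ($D^2\simeq\mathrm{id}$) that carries projective lattices to projective lattices (as $\mZ_2K$ is symmetric) and preserves regularity, since as a duality it maps the \AR\ quiver to its opposite, permuting the tubes and fixing the preprojective--preinjective component. Dualizing the defining sequence of a projective cover $0\to\Om\hM\to P\to\hM\to0$ exhibits $D\hM\hookrightarrow DP$ as an injective envelope, whence $D\Om\simeq\Om^{-1}D$ and therefore $D\Om D\simeq\Om^{-1}$. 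Thus I would apply the already proved case \eqref{om} to the exact sequence $0\to D\hat{M''}\to D\hM\to D\hat{M'}\to0$ of regular lattices and dualize the resulting sequence of syzygies back to obtain \eqref{iom}. (Equivalently, one can run a dual co-horseshoe argument in the Frobenius category of $\hR$-lattices, the order being Gorenstein, using that \eqref{dtube1} likewise gives $\rk_\mZ\Om^{-1}N=\rk_\mZ N$.)

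The main obstacle is precisely the excess free summand $Q$ thrown off by the horseshoe lemma: in the stable category the sequences are automatically exact, but upgrading this to a genuine short exact sequence of lattices requires $Q=0$, and the statement is simply false for a general \AR\ component. What rescues it here is the rank identity $\rk_\mZ\Om N=\rk_\mZ N$, which is special to the tubes and is exactly the content of the tube condition \eqref{dtube}; verifying that this forces $Q=0$ is the crux, and the remainder of the argument is formal.
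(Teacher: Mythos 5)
Your proposal is correct and follows essentially the same route as the paper: a horseshoe argument producing $0\to \Om M'\to \Om M\+Q\to \Om M''\to 0$ with $Q$ projective, the use of \eqref{dtube} and \eqref{dtube1} to force $Q=0$ (the paper compares the full dimension vectors $d_{\al\be}$ rather than just $\mZ$-ranks, but this is the same mechanism), and duality for \eqref{iom}. Your write-up merely makes explicit some details the paper leaves implicit (passage to $\hR$-lattices, the symmetric-algebra duality $D\Om D\simeq\Om^{-1}$), so there is nothing substantive to change.
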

\begin{proof}
 From the properties of syzygies it follows that there is an exact sequence 
 $$0\to\Om M'\to\Om M\+P\to\Om M''\to 0$$ for some projective $R$-module $P$. 
 But, as $\Om M=\tau M$, the formulae \eqref{dtube} and \eqref{dtube1} show that $d_{\al\be}(\Om M)=d_{\al\be}(\Om M')+d_{\al\be}(\Om M'')$.
 Therefore, $P=0$ and we get the exact sequence \eqref{om}. Then \eqref{iom} follows by duality.
\end{proof}

  \begin{corol}\label{l2} 
  If $0\to M'\to M\to M''\to 0$ is an exact sequence of regular $K$-lattices, the induced sequence of cohomologies
  \[
  0\to \hH^n(K,M')\to \hH^n(K,M)\to \hH^n(K,M'')\to 0
  \] 
 is also exact for every $n\in\mZ$.
 \end{corol}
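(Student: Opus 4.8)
The plan is to feed the short exact sequence into the long exact sequence of Tate cohomology and to prove that every connecting homomorphism vanishes. Since $K$ is finite and the lattices are free abelian groups, the sequence $0\to M'\to M\to M''\to 0$ is $\mZ$-split, so it induces the doubly infinite exact sequence
\[
 \cdots\to\hH^n(K,M')\to\hH^n(K,M)\to\hH^n(K,M'')\xrightarrow{\de_n}\hH^{n+1}(K,M')\to\cdots,
\]
and the assertion of the corollary is exactly that $\de_n=0$ for every $n$.

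Next I would reduce to strictly positive degrees. By Lemma~\ref{l1} the syzygy sequence $0\to\Om M'\to\Om M\to\Om M''\to 0$ is again exact, its terms are again regular lattices, and by \eqref{dtube1} no projective summand intervenes; hence there are natural isomorphisms $\hH^n(K,N)\simeq\hH^{n+1}(K,\Om N)$ that identify $\de_n$ for $(M',M,M'')$ with the connecting homomorphism of $(\Om^k M',\Om^k M,\Om^k M'')$ in degree $n+k$. Choosing $k\ge 1-n$ moves the degree into the range $n+k\ge 1$, where $\hH^{n+k}=H^{n+k}$ is ordinary cohomology and can be computed from the free resolution $\bP$.

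The heart of the argument is a cochain computation. I would represent a class in $H^n(K,M'')$ ($n\ge1$) by a cocycle $\ga''$ and lift its values through the $\mZ$-split surjection $M\to M''$ to a cochain $c$ with values in $M$; then $dc$ is a cocycle with values in $M'$ representing $\de_n$ of the class. The decisive observation is that on the overlattice $\tM=\bop_{\al\be}\tM_{\al\be}$ each of the operators $a\pm1$ and $b\pm1$ annihilates two of the eigencomponents and acts as $\pm2$ on the remaining two; since the differential of $\bP$ is built from these operators, $d$ maps $\tM$ into $2\tM$. Consequently the representing cocycle $dc$ takes all of its values in $2\tM\cap M'$.

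It remains to show that such a cocycle is a coboundary in $M'$, and this is the step I expect to be the main obstacle. On the one hand, every coboundary of an $M'$-valued cochain automatically lies in $2(M')^\sharp$, so the difficulty is entirely the discrepancy between $2\tM\cap M'$ and $2(M')^\sharp$, that is, between divisibility measured in the ambient lattice $\tM$ and divisibility measured in $(M')^\sharp=\tR M'$. This is precisely where regularity must be used: by Lemma~\ref{l1} and \eqref{dtube1} the sequence $0\to(M')^\sharp\to\tM\to(M'')^\sharp\to 0$ is exact and, being a sequence over $\tR=\mZ^4$, splits, and together with \eqref{ker} this should pin the image of $d$ down tightly enough to realize $dc$ as $d$ of an $M'$-valued cochain. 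Equivalently, in homological language, $\de_n$ is induced by the morphism $\Om M''\to M'$ classifying the extension, and the content of the obstacle is that this classifying morphism lies in the radical of $\Hom_A(\Om M'',M')$ and hence acts as zero on Tate cohomology; verifying this radical membership from Lemma~\ref{l1} is the crux.
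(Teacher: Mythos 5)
Your outer framework (the long exact sequence of Tate cohomology, the reduction to showing that all connecting homomorphisms vanish, and the dimension shifting via Lemma~\ref{l1}) matches the skeleton of the paper's argument, but the decisive step is missing, and the statement you reduce it to is in fact false. You reduce to: every $n$-cocycle with values in $2\tM\cap M'$ is a coboundary over $M'$. The paper's own proof of Theorem~\ref{HnZ} refutes exactly this. Take $M'=T^{11}_1$, the set of pairs $(z,z')\in\mZ_{++}\+\mZ_{--}$ with $z\equiv z'\!\pmod 2$, let $n$ be even and $v=(2,0)$; then $v=2\cdot(1,0)\in 2(M')^\sharp\sbe 2\tM\cap M'$ for any ambient regular $M\sp M'$, so the cocycle $\xi_v$ takes all its values in $2\tM\cap M'$, yet the whole content of that part of the paper's proof is that $\xi_v$ is \emph{not} a coboundary: it generates $H^n(K,M')\ne0$. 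So knowing where the values of $dc$ lie can never suffice; one must use that $dc$ is the coboundary of an $M$-valued cochain, i.e.\ genuinely global information about the sequence. Your fallback formulation fares no better: the inference ``the classifying morphism $\Om M''\to M'$ is radical, hence acts as zero on Tate cohomology'' is false as a general principle. For instance, multiplication by $2$ is a radical endomorphism of the completed lattice $\hat{\mZ}_{++}$, yet it acts as multiplication by $2\ne0$ on $\hH^0(K,\mZ_{++})\simeq\mZ/4\mZ$; and radical morphisms between regular lattices, such as the pure embedding $T^f_1\to T^f_2$, induce nonzero maps on cohomology (as follows from the very corollary under discussion applied to $0\to T^f_1\to T^f_2\to T^f_1\to0$). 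So regularity would have to enter in an essential way that your sketch does not supply.

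For comparison, the paper's argument avoids cocycle manipulation entirely. By \cite[Lem.\,2.2]{dpl2}, $\hH^0(K,N)\simeq\aK^{d_{++}(N)}$ for every lattice $N$ without direct summands isomorphic to $\mZ_{++}$; regular lattices have no such summands, and $d_{++}$ is additive on exact sequences of lattices (rationalize the sequence). Hence $\dim\hH^0(K,M)=\dim\hH^0(K,M')+\dim\hH^0(K,M'')$, which, inside the long exact sequence, forces $\hH^0(K,M')\to\hH^0(K,M)$ to be injective and $\hH^0(K,M)\to\hH^0(K,M'')$ to be surjective; that is, both connecting maps adjacent to degree $0$ vanish. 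Since by Lemma~\ref{l1}, together with \eqref{dtube} and \eqref{dtube1}, syzygies and cosyzygies of an exact sequence of regular lattices form again exact sequences of regular lattices, the isomorphisms $\hH^n(K,M)\simeq\hH^{n+1}(K,\Om M)\simeq\hH^{n-1}(K,\Om^{-1}M)$ transport this conclusion to every $n\in\mZ$. This degree-$0$ counting argument is precisely the ingredient your proposal lacks.
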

 \begin{proof}
  It is known \cite[Lem.\,2.2]{dpl2} that if $M$ contains no direct summands isomorphic to  $\mZ_{++}$, 
  then $\hH^0(K,M)\simeq\aK^{d_{++}(M)}$. It implies
  the claim for $n=0$. The general case follows from Lemma~\ref{l1} and the known fact that 
  $\hH^n(K,M)\simeq\hH^{n+1}(K,\Om M)\simeq\hH^{n-1}(K,\Om^{-1}M)$.
 \end{proof}

 \begin{proof}[Proof of Theorem~\ref{HnZ}]
  Note that \cite[Th.~2.3]{dpl2} shows that $M(n)/2M(n)\simeq H^n(K,M)$. Hence we only have to check that $\oxi$ is injective.
  First, we check the claim for the lattices $T^f_1$ and $T^{\la j}_1$.  As the calculations are quite similar, we only consider the case of 
  $M=T^{11}_1$ and $n$ even (it seems the most complicated). Then $M$ is the submodule of $\mZ_{++}\+\mZ_{--}$ consisting of the pairs 
  $(z,z')$ such that $z\equiv z'\!\pmod2$. Thus  the basic element of $M(n)$ is $v=(2,0)$. We have to check that $\xi_v\ne\dd\ga$ 
  for any map $\ga:\bP_{n-1}\to M$. Suppose that $\xi_v=\dd\ga$. Note that if $\ga(x^{n-1})=(z,z')$, then $\dd\ga(x^n)=(2z,0)$, 
  whence $z=1$. Let $\ga(x^{n-k-1}y^k)=(z_k,z_k')\ (0< k<n)$. Then
  \[
  \dd\ga(x^{n-1}y)=(a-1)(z_1,z'_1)-(b-1)(z,z')=(0,-2z'+2z_1') =(0,0), 
  \]
  hence $z_1'=z'\equiv 1\!\pmod2$;
  \[
    \dd\ga(x^{n-2}y^2)=(a+1)(z_2,z'_2)+(b+1)(z_1,z_1')=(2z_2+2z_1,0) =(0,0), 
  \]
  hence $z_2=-z_1\equiv 1\!\pmod2$. Repeating this process, we obtain that all $z_k\equiv 1\!\pmod2$, so $z_k\ne0$. Then 
  $\dd\ga(y^n)=(2z_{n-1},0)\ne0=\xi_v(y^n)$ and we get a contradiction.
  
  For the lattice $M=T^f_m$ or $T^{\la j}_m \ (m>1)$ we have an exact sequence $0\to M'\to M\to M''\to 0$, where $M'\simeq T^f_1$ or $T^{\la j}_1$
  and $M''\simeq T^f_{m-1}$ or $T^{\la i}_{m-1}$. It gives a commutative diagram with exact rows
  \[
 \xymatrix{ 0 \ar[r] & M'(n) \ar[r]\ar[d]^{\xi} &	M(n) \ar[r]\ar[d]^{\xi} & M''(n) \ar[r]\ar[d]^{\xi} & 0  \\
 				 0 \ar[r] & H^n(K,M') \ar[r] &H^n(K,M) \ar[r] & H^n(K,M'') \ar[r] & 0 	}  
  \]
  Using induction, we can suppose that the first and the third vertical maps satisfy the assertion of the theorem. Then the same is true for
  the second vertical map.
 \end{proof} 
 
 Analogous considerations give an explicit description of the cohomologies for \emph{regular colattices}, i.e. the dual modules of regular lattices. 
 For an indecomposable regular colattice $N=DM$ set $\bar{N}=\setsuch{u\in N}{2u=0}$ and
  \[
 N(n)=
 \begin{cases}
  \bar{N}_{++} &\text{if $n$ is odd}\\
  \bar{N}_{-+}  &\text{if $n$ is even and } M\notin T^\8,\\
  \bar{N}_{+-}  &\text{if $n$ is even and } M\in T^\8.
 \end{cases}
 \]
  We define a homomorphism $\eta:N(n)\to H^n(K,N)\ (n>0)$.  It sends an element $u\in N(n)$
 to the class of the cocycle $\eta_u$ which is defined as follows.
 
 \label{eta} 
 \begin{itemize}
 \item   If $n$ is even, $M\notin \kT^\8$ and $u\in \bar{N}_{-+}$,
 $$ \eta_u(x^my^{n-m})=
 \begin{cases}
  u & \text{ if } m=n,\\
  0 & \text{ otherwise}.
 \end{cases}$$ 
 
  \item  If $n$ is even, $M\in \kT^\8$ and $u\in \bar{N}_{+-}$,
 $$ \eta_u(x^my^{n-m})=
 \begin{cases}
  u & \text{ if } m=0,\\
  0 & \text{ otherwise}.
 \end{cases}$$ 
 
 \item  If $n$ is odd, $u\in \bar{N}_{++}$,
 $$  \eta_u(x^my^{n-m})=
 \begin{cases}
  u & \text{ if } m=n,\\
  0 & \text{ otherwise}.
 \end{cases}$$

 \end{itemize}
 One easily verifies that $\eta_u$ is indeed a cocycle.
 
 \begin{theorem}\label{HnD} 
  For every indecomposable regular colattice $N$ and every $n>0$ the map $\eta$ induces an isomorphism $N(n)\ito H^n(K,N)$.
 \end{theorem}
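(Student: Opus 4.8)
The plan is to run the proof of Theorem~\ref{HnZ} in parallel, carrying each step through the exact duality functor $D=\Hom_\mZ(-,\mD)$. As in the lattice case, I would first dispose of the counting: both $N(n)=\bar N_{\al\be}$ and $H^n(K,N)$ are finite elementary abelian $2$-groups, so once I know that they have the same order it suffices to prove that $\eta$ is \emph{injective}. The order identity I would obtain by Pontryagin (Tate) duality from the lattice computation, via $\hH^n(K,DM)\simeq D\,\hH^{-n-1}(K,M)$ applied to \cite[Th.~2.3]{dpl2} read in Tate degree $-n-1$. The substitution $n\mapsto -n-1$ reverses parity, and this is precisely why $N(n)$ selects the $++$ component for \emph{odd} $n$, one step out of phase with the definition of $M(n)$, and why the exceptional tube $\kT^\8$ keeps its special status on the colattice side.

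Second, I would settle the base cases $N=DT^f_1$ and $N=DT^{\la j}_1$ by an explicit cochain computation dual to the one carried out for $T^{11}_1$. Writing the $2$-torsion generator $u\in\bar N_{\al\be}$ and assuming $\eta_u=\dd\ga$ for some $\ga\colon\bP_{n-1}\to N$, I would feed the differential $d(x^ky^l)=(a+(-1)^k)x^{k-1}y^l+(-1)^k(b+(-1)^l)x^ky^{l-1}$ into the recursion and march down the monomials $x^{n-k-1}y^k$, using the eigenvalue conditions cutting out $\bar N_{\al\be}$, until the forced value of $\dd\ga$ at the terminal monomial contradicts the prescribed vanishing of $\eta_u$ there. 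I expect this to be the main obstacle: since $\mD$ is divisible, the ``oddness'' bookkeeping of the lattice argument must be rephrased as tracking which coordinates equal the nonzero element of order $2$, and the $a$- and $b$-eigenvalue constraints have to be handled simultaneously.

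Third, for $N=DM$ with $M=T^f_m$ or $T^{\la j}_m$ and $m>1$, I would take the exact sequence $0\to M'\to M\to M''\to0$ of Theorem~\ref{ptubes}, with $M'$ of length $1$ and $M''$ one step shorter, and dualize it (exactness of $D$) to $0\to DM''\to DM\to DM'\to0$, in which $DM'$ is a base case and $DM''$ falls under the induction. Combining Corollary~\ref{l2} in degree $-n-1$ (legitimate, as it holds for every $n\in\mZ$) with the duality $\hH^n(K,D-)\simeq D\,\hH^{-n-1}(K,-)$ and the exactness of $D$ shows that $0\to H^n(K,DM'')\to H^n(K,DM)\to H^n(K,DM')\to0$ is exact. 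The top row $0\to (DM'')(n)\to (DM)(n)\to (DM')(n)\to0$ is exact by duality from the exactness of the analogous sequence of eigencomponents used in the proof of Theorem~\ref{HnZ}, the relevant functor being exact on the divisible $2$-groups underlying colattices. Functoriality of $\eta$ turns this into a commutative ladder whose outer verticals are isomorphisms by the inductive hypothesis and the base case; the short five lemma then makes the middle vertical an isomorphism, which closes the induction.

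A recurring point that needs care is the compatibility of $D$ with the eigencomponent bookkeeping: I would verify the identity $(DM)_{\al\be}\simeq D\big(M/((a-\al)M+(b-\be)M)\big)$ to be sure that $N(n)$ picks out exactly the component matching the Tate-degree shift, and that $\kT^\8$ is the tube singled out on the colattice side. As an independent cross-check on this bookkeeping, and as an alternative source for both the order identity and the base cases, I would use the short exact sequence $0\to M^*\to M^*\*\mQ\to DM\to0$ with cohomologically trivial middle term, which yields $H^n(K,DM)\simeq\hH^{n+1}(K,M^*)$ and reduces the statement to Theorem~\ref{HnZ} applied to the dual lattice $M^*$.
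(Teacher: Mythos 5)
Your plan is a faithful execution of the analogy the paper gestures at (it omits the proof, declaring it ``quite analogous'' to Theorem~\ref{HnZ}), but it founders at its very first step, and the failure is not repairable: the order identity $|N(n)|=|H^n(K,N)|$ that you propose to extract from Tate duality is false. Duality does give $H^n(K,N)\simeq\Hom(\hH^{-n-1}(K,M),\mQ/\mZ)$, and combining $\hH^{j}(K,-)\simeq\hH^{j+1}(K,\Om\,-)$ with $\Om^2\simeq\mathrm{id}$ on regular lattices, this group has order $2^{d}$ where $d$ is the rank of an eigencomponent of $M$ (for $n$ even) or of $\Om M$ (for $n$ odd; note $\Om$ interchanges $T^{\la1}_m$ and $T^{\la2}_m$, a point your reading of \cite[Th.~2.3]{dpl2} ``in degree $-n-1$'' passes over). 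On the other side, the identity you flag for verification, $N_{\al\be}\simeq D\bigl(M/((a-\al)M+(b-\be)M)\bigr)$, is correct, but it yields $|\bar N_{\al\be}|=2^{e}$ with $e=\dim_{\aK}M/((a-\al)M+(b-\be)M+2M)$, and $e$ exceeds the rank $d_{\al\be}(M)$ exactly when the coinvariant quotient $M/((a-\al)M+(b-\be)M)$ has $2$-torsion --- which it does for lattices in the special tubes. The underlying point is that duality exchanges subobjects with quotient objects: the kernel-type eigencomponents used for lattices transport to the cokernels $N/((a-\al)N+(b-\be)N)$, not to the subgroups $\bar N_{\al\be}$, and the two differ by precisely this torsion.

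Concretely, take $M$ to be the lattice used in the paper's proof of Theorem~\ref{HnZ} (denoted there $T^{11}_1$): it has a $\mZ$-basis $v_1,v_2$ permuted by both $a$ and $b$, i.e. $M\simeq\mZ[K/C]$ with $C=\langle ab\rangle$. Then $N=DM\simeq\Hom_\mZ(\mZ[K/C],\mD)$ is coinduced from $C$, so Shapiro's lemma gives $H^n(K,N)\simeq H^n(C,\mD)=\mD/2\mD=0$ for every even $n>0$, by divisibility of $\mD$. Yet $N\simeq\mD^2$ with both generators acting by the coordinate swap, and one checks $\bar N_{\al\be}=\{(c,c)\mid 2c=0\}\simeq\mZ/2$ for all four pairs $(\al,\be)$; so $N(n)\ne0$ while $H^n(K,N)=0$, and $\eta$ is the zero map on a nonzero group. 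Your base-case computation would therefore not end in a contradiction: the step which in the lattice case forces the entries $z_k$ to be odd integers (hence nonzero) now asks for solutions of $2q=c$ in $\mD$, which exist; explicitly, for $n=2$ and $u=(c,c)$ one has $\eta_u=\dd\ga$ with $\ga(x)=(c,0)$ and $\ga(y)=(q,-q)$, $2q=c$. (Your own cross-check $H^n(K,DM)\simeq H^{n+1}(K,M^*)$ gives the same vanishing, since here $M^*\simeq M$.) Thus the proposal's counting step, its base case, and consequently the induction all collapse; any correct treatment has to replace the $2$-torsion of the subgroups $N_{\al\be}$ by the $2$-torsion of the quotients $N/((a-\al)N+(b-\be)N)$, whose corank does match $\dim H^n(K,N)$ in these examples --- in other words, the dualization is genuinely not the verbatim transport of bookkeeping that both you and the statement as literally written assume.
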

 \noindent
 We omit the proof since it is quite analogous to that of Theorem~\ref{HnZ} (and even easier).

  \label{s4} 
 \section{Action of automorphisms}
 
 \subsection{Lattices}
 \label{s31} 
 
 We also need to know how automorphisms of lattices and of the group act on cohomologies. Let $M=T^f_m$ or $M=T^{\la j}_m$. Consider the chain of
 submodules $M_k\sb M$ from Theorem~\ref{ptubes}. We denote by $E^f_{k,m,n}$ or, respectively, by $E^{\la j}_{k,m,n}$,
 where $0\le k<m$, the set $M_k(n)\setminus \big(2M_k(n)+ M_{k+1}(n)\big)$. Note that $E^{\la j}_{k,m,n}\ne\0$ \iff $k<m$ and
 \begin{equation}\label{km} 
 k\equiv
  \begin{cases}
  j &\text{if $n$ is even}\\ 
  j+1 &\text{if $n$ is odd}
  \end{cases}\!\pmod2
 \end{equation}
 \\ 
  Theorems~\ref{ptubes} and \ref{etubes} easily imply the following result.
  
  \begin{theorem}\label{act} 
  Let $e\in E^f_{k,m,n}$ or $e\in E^{\la j}_{k,m,n}$, and $e'\in E^f_{k',m',n}$ or, respectively, $e'\in E^{\la j'}_{k',m',n}$. There is a homomorphism
  $\theta:T^f_m\to T^f_{m'}$ or, respectively, $\theta:T^{\la j}_m\to T^{\la j'}_{m'}$ such that $\theta(e)=e'$ \iff either $m\ge m'$ and $k\le k'$ or
  $m\le m'$ and $k\le k'-m'+m$. If $m=m'$ and $k=k'$, $\theta$ can be chosen as an isomorphism.
  \end{theorem}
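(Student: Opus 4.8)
The plan is to reduce the statement to a purely combinatorial question about the chain of submodules from Theorem~\ref{ptubes} and then translate the existence of a suitable homomorphism $\theta$ into the language of endomorphism rings described in Theorem~\ref{etubes}. First I would recall that, by construction, $e \in E^f_{k,m,n}$ means that $e$ lives in $M_k(n)$ but not in $2M_k(n)+M_{k+1}(n)$; thus $e$ represents a nonzero element of the one-dimensional $\aK$-vector space $M_k(n)/(2M_k(n)+M_{k+1}(n))$, which by Theorem~\ref{ptubes}(1) may be identified with the corresponding $(n)$-component of the simple quotient $M_k/M_{k+1} \simeq T^f_1$ (or $T^{\la i}_1$). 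The key observation is that a homomorphism $\theta$ carries the filtration of the source into the filtration of the target in a controlled way: $\theta(M_k(n)) \subseteq M_{k'}(n)$ precisely when the ``regular length'' bookkeeping of the tube permits it.

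The heart of the argument is the explicit description of homomorphisms between indecomposables in a tube. I would use the fact that $\Hom_A(T^f_m, T^f_{m'})$ (and likewise in the non-homogeneous case) is governed, modulo the ideal $2\Mat$ appearing in Theorem~\ref{etubes}, by the endomorphism algebra $\aK[t]/(f^{\min(m,m')})$ acting through the Frobenius blocks in \eqref{tuberep}. Concretely, every homomorphism is (up to the $2\Mat(\cdots)^4$ part, which lands inside $2N$ and hence kills the relevant $\aK$-classes) the restriction of a map $\tM \to \tN$ built from a polynomial in the nilpotent/Frobenius operator together with the natural surjections and inclusions realizing the diagrams \eqref{htube}, \eqref{stube}. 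Composing the injections of Theorem~\ref{ptubes}(2) and the surjections of Theorem~\ref{ptubes}(3), I would build, whenever the numerical conditions hold, an explicit $\theta$ sending the distinguished class $e$ to $e'$: the condition $m \ge m'$, $k \le k'$ corresponds to first projecting onto a quotient $T^f_{m'}$ and then shifting the filtration level down by $k'-k \ge 0$, while $m \le m'$, $k \le k'-m'+m$ corresponds to an inclusion into a larger module followed by the analogous shift. The constraint $k \le k'$ (resp.\ $k \le k'-m'+m$) is exactly the requirement that a map of tube-modules cannot decrease the filtration index modulo the allowed regular shift, so that the image of the level-$k$ class can only reach level $\ge k$ in the appropriate normalization.

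For the converse — that these numerical inequalities are necessary — I would argue that any $\theta$ sends $M_k$ into $M'_{k'}$ for some $k'$ no smaller than what the inequalities allow, because $\theta$ respects the regular socle/radical filtration of the tube: the submodule $M_k(n)$ maps into the corresponding filtration piece, and a class nonzero modulo $2M_{k'}(n)+M_{k'+1}(n)$ can only be hit from a level $k$ with $k \le k'$ (homogeneous case) or $k \le k'-m'+m$ (after accounting for the difference in lengths). This uses that the $2\Mat$-part of $\End$ (and of $\Hom$) acts trivially on the $\aK$-classes defining the sets $E$, so only the $\aK[t]/(f^{\bullet})$-part matters, and over this ring the action on the one-dimensional layers is transparent. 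Finally, when $m=m'$ and $k=k'$ both inequalities are equalities, the constructed $\theta$ is a bijection on each filtration layer and, being a homomorphism between indecomposables of equal dimension inducing an isomorphism on $\Phi$, lifts to an isomorphism by Corollary~\ref{isolift}.

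I expect the main obstacle to be the precise bookkeeping in the \emph{non-homogeneous} tubes $\kT^{\la}$, where the two coroots $T^{\la 1}$ and $T^{\la 2}$ alternate (the $i \ne j$ phenomenon in Theorem~\ref{ptubes}(1)), so that the parity condition \eqref{km} interacts with whether $k$ is even or odd; one must check that the distinguished component $M(n)$ lands in the correct slot $T^{\la j}$ versus $T^{\la i}$ at each filtration step, and that the shift $k \mapsto k - m' + m$ is compatible with this alternation. The homogeneous case $T^f_m$ is comparatively routine once the identification of $\Hom$ with $\aK[t]/(f^{\min(m,m')})$ is in hand; the care needed is entirely in verifying that the explicit matrices realizing injections and surjections in \eqref{htube}--\eqref{stube} do send the chosen basis vector $e$ to a scalar multiple of $e'$ that is a unit in $\aK$, rather than to an element already in $2M_{k'}(n)+M_{k'+1}(n)$.
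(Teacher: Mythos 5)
Your route is the one the paper intends: the paper offers no argument for Theorem~\ref{act} at all beyond the sentence that Theorems~\ref{ptubes} and \ref{etubes} ``easily imply'' it, and your sketch --- filtration bookkeeping from Theorem~\ref{ptubes}, the description of $\Hom$ modulo $2\Hom_{\tR}$ coming from Theorems~\ref{riro} and \ref{etubes}, composition of the canonical injections/surjections, and Corollary~\ref{isolift} for the isomorphism claim --- is exactly that route. Your translation of the inequalities (the filtration level of the image can only exceed that of the source by at least $\max(0,m'-m)$) is also the correct one, as is your warning about the parity bookkeeping in the special tubes.

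There is, however, a concrete gap at the very end. What your last paragraph delivers is that $\theta(e)$ is a \emph{unit multiple} of $e'$ \emph{modulo} $2M_{k'}(n)+M_{k'+1}(n)$, which is strictly weaker than the assertion $\theta(e)=e'$. Two separate things go wrong. First, the literal assertion cannot be proved, because it is false as stated: if $e_0\in E^f_{k,m,n}$ then also $3e_0\in E^f_{k,m,n}$, and any $A$-homomorphism satisfies $\theta(3e_0)=3\theta(e_0)$; taking $m'=m$, $k'=k$, $e=3e_0$, $e'=e_0$, equality would give $e_0/3^i\in T^f_m$ for all $i$ by iterating $\theta$, hence $e_0=0$. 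So the statement has to be read as equality of classes modulo $2T^f_{m'}(n)$ --- which is all that is used later, since Theorem~\ref{cohom} and Corollary~\ref{orbit} only see the classes $\oxi(\,\cdot\,)$ in $H^n\simeq M(n)/2M(n)$. Second, even under that reading your endpoint does not suffice: the error term lies in $M_{k'+1}(n)$, which is \emph{not} contained in $2T^f_{m'}(n)$ (its image in $H^n(K,T^f_{m'})$ is nonzero by Theorem~\ref{HnZ} and Corollary~\ref{l2}), and for $\deg f>1$ a unit multiple over the residue field $\aK[t]/(f)$ is not equality. You need a closing argument: note that $\setsuch{\theta(e)\bmod 2T^f_{m'}(n)}{\theta\in\Hom_A(T^f_m,T^f_{m'})}$ is a subgroup stable under $\End_AT^f_{m'}$, absorb the unit by composing with an endomorphism from Theorem~\ref{etubes}, and kill the $M_{k'+1}(n)$ term by downward induction on the filtration level, the required homomorphisms into deeper levels existing precisely because $(m',k'')$ with $k''>k'$ still satisfies your inequalities relative to $(m,k)$, and the induction terminating at $M_{m'}=0$. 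A minor additional correction: the layer $M_k(n)/\bigl(2M_k(n)+M_{k+1}(n)\bigr)$ is one-dimensional over $\aK[t]/(f)$, hence $\deg f$-dimensional over $\aK$; your ``one-dimensional $\aK$-vector space'' is accurate only for the special tubes and for $\deg f=1$.
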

 
 \begin{defin}\label{non} 
 \begin{enumerate}
 \item We fix for every quadruple $(f,m,k,n)$, where $k<m$, an element $e^f_{m,k,n}\in E^f_{m,k,n}$ and for every quintuple $(\la,j,m,k,n)$, where
  $k<m$ and $k,j$ satisfy the condition \eqref{km}, an element $e^{\la j}_{m,k,n}\in E^{\la j}_{m,k,n}$.
 
 \medskip
 \item   For a homogeneous tube $\kT^f$ we call a \emph{standard sequence} a sequence $\si=(m_i,k_i)\ (1\le i\le s)$, where 
 $m_1>m_2>\dots>m_s$, $1\le k_i< m_i$ and
 $k_{i'}<k_i<k_{i'}+m_i-m_{i'}$ for $ i<i'$. We set $M^f_\si=\bop_{i=1}^sT^f_{m_i}$ and 
 $e^f_{\si,n}=\sum_{i=1}^se^f_{m_i,k_i,n}$.

\medskip
 \item  For a special tube $\kT^\la$ we call a \emph{standard sequence} a sequence $\si=(j_i,m_i,k_i)\ (1\le i\le s)$, where $j_i\in\{1,2\}$,
 $m_1>m_2>\dots>m_s$, $1\le k_i< m_i$ and  $k_{i'}<k_i<k_{i'}+m_i-m_{i'}$ for $ i<i'$. We set $M^\la_\si=\bop_{i=1}^sT^{\la j_i}_{m_i}$.
 We call such sequence
 \begin{itemize}
  \item  \emph{even} if $k_i\equiv j_i\!\pmod2$ for all $i$,
  \item    \emph{odd} if $k_i\equiv j_i+1\!\pmod2$ for all $i$.
  \end{itemize} 
  For an even (odd) standard sequence and even (respectively, odd) $n$ we set  $e^\la_{\si,n}=\sum_{i=1}^s e^{\la j_i}_{m_i,k_i,n}$.
    
    \medskip
    \item  
    We define \emph{standard data} as a pair $\De=(\Si,S)$, where $\Si=\{\kT^{f_q}\}$  $1\le q\le r$ is a set of different tubes
    and $S=\{\si_q\}$ $(1\le q\le r)$ is a set of standard sequences $\si_q$ for each tube $T^{f_q}$.  We call such data \emph{special}
    if at least one of the tubes $\kT^{f_q}$ is special. Special standard data are said to be even  or odd if all standard sequences $\si_q$
    for special tubes $\kT^{f_q}$ are so. We set $M_\De=\bop_{q=1}^rM^{f_q}_{\si_q}$ and $e_{\De,n}=\sum_{q=1}^s e^{f_q}_{\si_q,n}$.
   In the latter definition we suppose that, if $\De$ is special, it is even if $n$ is even and it is odd if $n$ is odd (otherwise the element
    $e_{\De,n}$ is not defined).
 
    \vskip0.5em\noindent
    If the tube $\kT^\8$ occurs in $\Si$, say, $f_k=\8$, 
  we denote by $e_{\De,n}^\8=e^\8_{\si_k,n}$ and by $e_{\De,n}^0$ the rest of the sum defining $e_{\De,n}$. 
  Of course, it is possible that $e_{\De,n}^\8=0$ or $e_{\De,n}^0=0$.
\end{enumerate}
 \end{defin}
 
 Theorem~\ref{act} implies the following result.
 
 \begin{theorem}\label{cohom} 
  Let $M$ be a regular $R$-lattice and $\eps\in H^n(K,M)\ (n>0)$. 
  There are standard data $\De$ and a decomposition $\theta:M\ito M_0\+M_\De$
   such that the projection of $\theta(\eps)$ onto $H^n(K,M_0)$ is zero and 
  the projection of  $\theta(\eps)$ onto $H^n(K,M_\De)$ equals $\oxi(e_{\De,n})$
  \emph{(see page~\pageref{ksi} for the definition of $\oxi$).}
  
  \emph{If $\eps=0$, $M_\De=0$.}
 \end{theorem}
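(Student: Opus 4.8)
The goal is to normalize an arbitrary cohomology class $\eps\in H^n(K,M)$ for a regular lattice $M$. The plan is to reduce everything to the indecomposable building blocks via Theorem~\ref{HnZ} and then to apply the transitivity of the automorphism action supplied by Theorem~\ref{act}. First I would decompose $M$ into its indecomposable regular summands. Since non-isomorphic tubes do not communicate (by Theorem~\ref{ptubes}(4), every homomorphism between lattices in different tubes has image in $2N$, hence induces zero on cohomology by \eqref{ker}), the class $\eps$ splits as a direct sum $\eps=\sum_q\eps_q$ indexed by the tubes $\kT^{f_q}$ actually occurring in $M$, and each tube can be treated independently. Within a single tube $\kT^f$, collect the summands $T^f_{m}$ with multiplicities; by Theorem~\ref{HnZ} the component of $\eps$ in $H^n(K,T^f_m)$ is represented by an element of $M(n)/2M(n)$, and the filtration $M_k$ of Theorem~\ref{ptubes} equips $M(n)$ with the layered structure whose strata are exactly the sets $E^f_{k,m,n}$.

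The heart of the argument is then a reduction to \emph{standard form} one summand-cluster at a time. I would argue by induction on the total rank (or on the number of indecomposable summands) within a fixed tube. Pick a nonzero component of $\eps$; via Theorem~\ref{HnZ} it lies in some layer, i.e. is represented modulo $2$ by an element $e$ belonging to some $E^f_{k,m,n}$ with $k$ minimal and, among those, $m$ maximal. Theorem~\ref{act} guarantees, for any two layer-representatives $e\in E^f_{k,m,n}$ and $e'\in E^f_{k',m',n}$ with the stated inequalities on $(k,k',m,m')$, a homomorphism $\theta$ carrying $e$ to $e'$ (an isomorphism when $(m,k)=(m',k')$); this is precisely the mechanism that lets me move, absorb, or kill the off-diagonal contributions of $\eps$ by a suitable automorphism of $M$. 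Using these maps I would clear all contributions that are ``comparable'' to the chosen leading one, splitting off a summand on which $\eps$ restricts to the canonical generator $\oxi(e^f_{m,k,n})$, and push the remainder into a complementary summand of strictly smaller rank. The surviving leading indices $(m_i,k_i)$ are exactly those obeying the interlacing conditions $m_1>m_2>\dots$ and $k_{i'}<k_i<k_{i'}+m_i-m_{i'}$ of Definition~\ref{non}, i.e. a standard sequence; this is what forces the normal form to be standard rather than arbitrary.

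For the special tubes $\kT^\la$ ($\la\in\{0,1,\8\}$) the same scheme applies, with the extra bookkeeping that the layer $E^{\la j}_{k,m,n}$ is nonempty only when $k$ and $j$ satisfy the parity constraint \eqref{km}. Consequently a standard sequence contributing to $H^n$ must be even for even $n$ and odd for odd $n$, which is exactly the evenness/oddness clause built into the definition of the standard datum $\De$ and of $e_{\De,n}$; I would simply verify that the layers available in the given parity are precisely the ones the definition selects, so that no contribution is lost and none spuriously appears. Reassembling over all tubes yields the global standard datum $\De=(\Si,S)$, the isomorphism $\theta:M\ito M_0\+M_\De$, and the identification of the $M_\De$-component of $\theta(\eps)$ with $\oxi(e_{\De,n})$, while the complementary summand $M_0$ carries the zero class. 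The case $\eps=0$ gives $M_\De=0$ since there is no nonzero leading layer to extract. The main obstacle I anticipate is the inductive clearing step: one must check that after splitting off the leading standard summand the automorphism used does not reintroduce contributions in already-cleared layers, so that the interlacing inequalities are preserved throughout the induction; this is where the precise inequalities in Theorem~\ref{act} (and the fact that $\theta$ can be taken an isomorphism exactly on the diagonal $(m,k)=(m',k')$) must be invoked with care.
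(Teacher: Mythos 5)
Your proposal is correct and takes essentially the same route as the paper: the paper offers no detailed argument, stating only that Theorem~\ref{act} implies the result, and your reduction --- splitting $\eps$ across tubes via Theorem~\ref{ptubes}(4) (maps with image in $2N$ induce zero on the $2$-torsion cohomology), then using the maps of Theorem~\ref{act} to clear all ``comparable'' layer contributions so that the surviving indices form exactly the antichains described by the interlacing inequalities of Definition~\ref{non} --- is precisely the intended derivation. The parity bookkeeping for the special tubes via \eqref{km} also matches the evenness/oddness clause built into the definition of $e_{\De,n}$.
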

  
  In particular, we obtain a description of orbits of automorphisms of indecomposable regular lattices on cohomologies.
    
 \begin{corol}\label{orbit} 
  Let $M$ be an indecomposable regular lattice. Consider the chain \eqref{chain} of its submodules and denote by
  $H^n_k(K,M)$ the image in $H^n(K,M)$ of $H^n(K,M_k)$. Then the orbits of $\aut_KM$ on $H^n(K,M)\ (n>0)$
  are $H^n_k(K,M)\setminus H^n_{k+1}(K,M)\ (0\le k< m)$ and $\{0\}$. 
 \end{corol}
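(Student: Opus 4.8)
The plan is to derive Corollary~\ref{orbit} directly from Theorem~\ref{cohom} in the special case where $M$ is indecomposable. Recall from Theorem~\ref{cohom} that for any regular lattice and any $\eps\in H^n(K,M)$ there is a decomposition $\theta:M\ito M_0\+M_\De$ carrying $\eps$ to a canonical class supported on $M_\De$ and equal to $\oxi(e_{\De,n})$. When $M$ is \emph{indecomposable}, lying in a single tube $\kT^f$ (or $\kT^\la$), the possible standard data $\De$ are severely restricted: since $\theta$ is an automorphism of $M$, the summand $M_\De$ must itself be (isomorphic to) $M$, so the standard sequence $\si$ must consist of a single pair $\si=(m,k)$ with $0\le k<m$ (respectively a single triple $(j,m,k)$ for a special tube), and $M_0=0$ whenever $\eps\ne0$. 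Thus the canonical classes reduce to the single elements $\oxi(e^f_{m,k,n})$ (or $\oxi(e^{\la j}_{m,k,n})$), indexed precisely by $k$ with $0\le k<m$, together with the zero class.

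First I would record what $H^n_k(K,M)$ is concretely. By definition $H^n_k(K,M)$ is the image of $H^n(K,M_k)$ under the map induced by the inclusion $M_k\hookrightarrow M$. Using Corollary~\ref{l2} (applied to the exact sequences coming from the chain \eqref{chain}), the induced maps on cohomology are injective, so $H^n_k(K,M)$ is genuinely a subgroup identified with $H^n(K,M_k)$, and via the isomorphism $\oxi$ of Theorem~\ref{HnZ} it corresponds to the image $M_k(n)/2M_k(n)$ inside $M(n)/2M(n)=M_0(n)/2M_0(n)$. Consequently the difference set $H^n_k(K,M)\setminus H^n_{k+1}(K,M)$ corresponds, under $\oxi$, exactly to $M_k(n)\setminus\bigl(2M_k(n)+M_{k+1}(n)\bigr)$, which is the set $E^f_{k,m,n}$ (or $E^{\la j}_{k,m,n}$) introduced just before Theorem~\ref{act}. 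This is the bridge between the filtration language of the corollary and the $E$-set language of Theorems~\ref{act} and \ref{cohom}.

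With this translation in hand, the proof reduces to two inclusions for each $k$. For one direction, given $\eps$ with $\oxi^{-1}(\eps)\in E^f_{k,m,n}$, Theorem~\ref{cohom} (in its indecomposable form above) produces an automorphism $\theta$ of $M$ sending $\eps$ to the fixed representative $\oxi(e^f_{m,k,n})$; hence the whole difference set $H^n_k\setminus H^n_{k+1}$ lies in a single orbit. For the reverse direction, namely that the orbit cannot leave this difference set, I would invoke Theorem~\ref{act}: an automorphism $\theta$ of $M$ (so $m=m'$) sends $e\in E^f_{k,m,n}$ to $e'\in E^f_{k',m,n}$ only when $k\le k'$ and $k'\le k$, forcing $k=k'$. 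Thus automorphisms preserve the index $k$, so each difference set is exactly one orbit; the class $0$ is its own orbit.

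The main obstacle I expect is the careful verification that under $\oxi$ the subgroup $H^n_k(K,M)$ matches $M_k(n)/2M_k(n)$ compatibly across all $k$ simultaneously — that is, that the isomorphisms $\oxi$ for $M_k$ and for $M$ fit into a commutative ladder with the inclusion-induced maps, so that the filtration of $H^n(K,M)$ really is the image filtration of the $M_k(n)$. This is essentially the naturality of $\oxi$ built into the commutative diagram in the proof of Theorem~\ref{HnZ}, combined with the exactness from Corollary~\ref{l2}; once that compatibility is pinned down, the orbit statement follows formally from Theorems~\ref{act} and \ref{cohom}. A minor additional point to check is that the inequalities in Theorem~\ref{act} specialize correctly in the special-tube case $M=T^{\la j}_m$, where the parity constraint \eqref{km} must be respected so that $E^{\la j}_{k,m,n}\ne\0$ for the relevant $k$.
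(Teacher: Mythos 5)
Your overall route is the one the paper intends (the paper offers no written proof: it presents the corollary as an immediate consequence of Theorems~\ref{act} and \ref{cohom}), and two of your three steps are sound. The bridge between the filtration and the $E$-sets is correct, though it needs one unstated purity fact: for $v\in M_k(n)$ one has $v\in M_{k+1}(n)+2M(n)$ \iff $v\in M_{k+1}(n)+2M_k(n)$, because $2u=v-w\in M_k(n)$ forces $u\in M_k(n)$ (the quotient $M(n)/M_k(n)$ embeds into the lattice $M/M_k$, hence is torsion free). Likewise the transitivity of $\aut_KM$ on each set $H^n_k(K,M)\setminus H^n_{k+1}(K,M)$ is fine: two classes there have representatives $e,e'\in E^f_{k,m,n}$, Theorem~\ref{act} with $m=m'$, $k=k'$ gives an isomorphism $\theta$ with $\theta(e)=e'$ \emph{on the nose}, and naturality of $\xi$ gives $\theta_*\oxi(\bar e)=\oxi(\bar e')$. (A side remark: your ``indecomposable form'' of Theorem~\ref{cohom} needs standard sequences with $k=0$ allowed, while Definition~\ref{non} literally requires $1\le k_i<m_i$; the corollary itself shows the definition must be read with $0\le k_i<m_i$.)

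The genuine gap is in the separation step. An automorphism acts on cohomology, i.e.\ on $M(n)/2M(n)$; from $\theta_*\oxi(\bar e)=\oxi(\bar{e'})$ you only get $\theta(e)\equiv e'\pmod{2M(n)}$, not $\theta(e)=e'$. Theorem~\ref{act} is a statement about actual lattice elements, and the sets $E^f_{k,m,n}$ are \emph{not} unions of cosets of $2M(n)$: they are cut out using $2M_k(n)$, and if $u\in M(n)\setminus M_k(n)$ then $e+2u$ lies in no $E^f_{j,m,n}$ whatsoever (it is not in $M_j(n)$ for $j\ge$ the level of $u$, and for smaller $j$ it lies in $2M_j(n)+M_{j+1}(n)$). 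So the phrase ``$\theta$ sends $e\in E^f_{k,m,n}$ to $e'\in E^f_{k',m,n}$'' does not typecheck, and Theorem~\ref{act} cannot be quoted. Nor can one always repair $\theta$ by subtracting some $\psi\in\End_AM$ with $\psi(e)=\theta(e)-e'$: e.g.\ for $M=T^f_1$ with $\deg f=2$ and $n$ even one has $M_{++}=2\mZ^2$, and for $e=(6,0)\in E^f_{0,1,n}$ Theorem~\ref{etubes} shows every $\psi\in\End_AM$ takes $e$ into $6\mZ^2$, so no $\psi$ realizes the admissible discrepancy $(4,0)\in 2M(n)$. What is missing is precisely the assertion that automorphisms preserve the filtration $H^n_k(K,M)$, and this needs its own argument. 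One clean way: the explicit representations \eqref{tuberep} show that for a regular tube no map $f_{\al\be}$ can be dropped (any three have zero common kernel), whence $M(n)=2\tM(n)$; then every $\psi\in2\Hom_{\tR}(\tM,\tM)$ maps $M(n)$ into $2M(n)$, so by \eqref{ker} and Theorem~\ref{riro} the action of $\End_AM$ on $H^n(K,M)\simeq M(n)/2M(n)$ factors through $\End_\La\Phi(M)\simeq\aK[t]/(f(t)^m)$; under this identification (using that the chain \eqref{chain} is the lift, via Corollary~\ref{sublift}, of the canonical chain of subrepresentations) $H^n_k(K,M)$ becomes the radical filtration $f(t)^kH^n(K,M)$, which is stable under all units, hence under $\aut_KM$. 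With that supplement the orbits cannot merge and your proof closes; without it, the key inequality ``$k\le k'$ and $k'\le k$'' is not justified.
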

 
 The group of automorphisms of the group $K$ is the symmetric group $S_3$: it just permutes the elements $a,b$ and $c=ab$. Its generators are the
 transpositions $\tau_2:a\leftrightarrow b$ and $\tau_3:a\leftrightarrow c$. They permute the $+-$ component of the diagram \eqref{e01}, 
 respectively, with the $-+$ component and with the $--$ component. Thus $\tau_2$ permutes $\kT^1$ and $\kT^0$,
 while $\tau_3$ permutes $\kT^1$ and $\kT^\8$. Rather simple matrix calculations show that $\tau_2$ permutes $\kT^f$ with $\kT^{f^{(2)}}$,
 while $\tau_3$ permutes $\kT^f$ with $\kT^{f^{(3)}}$, where
 \begin{align*}
  & f^{(2)}(t)=f(1)^{-1}(t-1)^df\left(\dfrac{t}{t-1}\right),\\
  & f^{(3)}(t)=(-1)^df(1-t),
 \end{align*}
 where $d=\deg f$. 
 It induces the action of $S_3$ on the set of standard data. 
 Note that, if $\psi\in\aut K$, there is an automorphism $\vi\in\aut M_{\tau\De}$ such that $\psi\xi(e_{\De,n})=\vi\xi(e_{\psi\De,n})$.
 
\subsection{Colattices}
\label{s32}

 Let now $N=DM$, where $M$ is a $K$-lattice. If $M$ is regular, we call $N$ regular too.
  If $N=DM$, where $M=T^f_m$ or $M=T^{\la j}_m$, there is a chain of
 submodules, dual to the chain \eqref{chain} from Theorem~\ref{ptubes}
 \begin{equation}\label{dchain} 
   0=N_0\sb N_1\sb N_2\sb \dots\sb N_{m-1} \sb N_m=N,
 \end{equation}
  where $N_k=M_k^\perp$ and
 \[
  N_{k+l}/N_k\simeq 
   \begin{cases}
    DT^f_l &\text{if } N=DT^f_m,\\
    DT^{\la j}_l &\text{if } N=DT^{\la j}_m \text{ and $k$ is even},\\
    DT^{\la i}_l, &\text{where $i\ne j$, if } N=DT^{\la j}_m \text{ and $k$ is odd.}   
  \end{cases} 
 \]
 We denote by $Z^f_{k,m,n}$ or, respectively, by $Z^{\la j}_{k,m,n}$ the set $N_{k+1}(n)\setminus N_k$.
 Again, $Z^{\la j}_{k,m,n}\ne\0$ \iff $k<m$ and the condition \eqref{km} holds.

The duality gives analogues of Theorems~\ref{act} and  \ref{cohom}.
  
  \begin{theorem}\label{dact} 
    Let $z\in Z^f_{k,m,n}$ or $z\in Z^{\la j}_{k,m,n}$, and $z'\in Z^f_{k',m',n}$ or, respectively, $z'\in Z^{\la j'}_{k',m',n}$. There is a homomorphism
  $\theta: DT^f_m\to DT^f_{m'}$ or, respectively, $\theta: DT^{\la j}_m\to DT^{\la j'}_{m'}$ such that $\theta(z)=z'$ \iff either $m\le m'$ and $k\ge k'$ or
  $m\ge m'$ and $k\ge k'-m'+m$. If $m=m'$ and $k=k'$, $\theta$ can be chosen as an isomorphism.
  \end{theorem}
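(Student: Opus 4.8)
The plan is to deduce Theorem~\ref{dact} from Theorem~\ref{act} by transporting it through the duality $D=\Hom_\mZ(-,\mD)$. Since cohomology and all the relevant homomorphisms only see the $2$-adic part (recall $4\hH^n(K,M)=0$ and $\hH^n(K,M)\simeq\hH^n(K,\hM)$), I would work throughout with $\hR$-lattices and their duals, where $D$ is an exact contravariant duality with $DD\simeq\mathrm{id}$ (Matlis/Pontryagin duality over $\mZ_2$ with dualizing module $\mD$). In this setting $D$ induces, for regular lattices $M,M'$, a natural isomorphism $\Hom_A(M',M)\ito\Hom_A(DM,DM')$, $\phi\mapsto D\phi$, where $(D\phi)(z)=z\circ\phi$. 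Thus every $\theta:DT^f_m\to DT^f_{m'}$ is $D\phi$ for a unique $\phi=\theta^\vee:T^f_{m'}\to T^f_m$, the condition $\theta(z)=z'$ becomes $z\circ\theta^\vee=z'$, and $\theta$ is an isomorphism \iff $\theta^\vee$ is.

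Next I would match the two filtrations. Applying $D$ to the chain \eqref{chain} of Theorem~\ref{ptubes} and using $M_k/M_{k+l}\simeq T^f_l$ (so $M/M_k\simeq T^f_k$), the annihilator $N_k=M_k^\perp$ is canonically $D(M/M_k)\simeq DT^f_k$, which is precisely the dual chain \eqref{dchain}. An element $z\in Z^f_{k,m,n}=N_{k+1}(n)\setminus N_k$ is then a functional on $M$ vanishing on $M_{k+1}$ but not on $M_k$, i.e. one detecting the layer $N_{k+1}/N_k\simeq D(M_k/M_{k+1})$. The element $e\in E^f_{k,m,n}$, in turn, detects the dual layer $M_k(n)/(2M_k(n)+M_{k+1}(n))$ sitting over the same $M_k/M_{k+1}\simeq T^f_1$. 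Hence the induced pairing $\langle z,e\rangle=z(e)$ identifies $z$ with a distinguished $e$ \emph{at the same level $k$}: the one-step shift in the definition of $Z$ (namely $N_{k+1}\setminus N_k$ versus $M_k\setminus(2M_k+M_{k+1})$) is exactly what keeps the two levels equal. The same bookkeeping, together with the parity shift built into the definitions of $M(n)$ and $N(n)$ (which reflects a degree shift in Tate cohomology and is compensated precisely so levels match), shows that the nonemptiness condition \eqref{km} and, for special tubes, the alternation $i\ne j$ along the chain are preserved under $D$.

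With this dictionary in place, I would rewrite $z\circ\theta^\vee=z'$ as the statement that $\theta^\vee$ carries $e'\in E^f_{k',m',n}$ to $e\in E^f_{k,m,n}$ (modulo the lower-filtration and $2$-divisible ambiguity that the sets $E$ and $Z$ are designed to absorb), so that the \emph{existence} of $\theta$ with $\theta(z)=z'$ is equivalent to the existence of $\theta^\vee$ with $\theta^\vee(e')=e$. Theorem~\ref{act}, applied to $\theta^\vee:T^f_{m'}\to T^f_m$ — whose \emph{source} now has dimension $m'$ and level $k'$ and whose \emph{target} has dimension $m$ and level $k$ — then asserts existence precisely when $(m'\ge m$ and $k'\le k)$ or $(m'\le m$ and $k'\le k-m+m')$. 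These are exactly the inequalities $(m\le m'$ and $k\ge k')$ or $(m\ge m'$ and $k\ge k'-m'+m)$ of Theorem~\ref{dact}; the reversal of the inequalities is simply the swap of source and target forced by the contravariance of $D$. The isomorphism clause for $m=m'$, $k=k'$ follows from the last sentence of the first paragraph, and the special-tube case $DT^{\la j}_m\to DT^{\la j'}_{m'}$ is identical once \eqref{km} and the alternation are carried over.

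The main obstacle is the middle step: verifying that the duality pairing aligns $E^f_{k,m,n}$ and $Z^f_{k,m,n}$ at the \emph{same} level $k$, and that $\theta(z)=z'$ is genuinely \emph{equivalent} to (not merely implied by) $\theta^\vee(e')=e$ in the relevant quotient. This requires combining the explicit cocycle descriptions of $\xi,\oxi$ (Theorem~\ref{HnZ}) and of $\eta$ (Theorem~\ref{HnD}) with the component/parity correspondence between $M(n)$ and $N(n)$, and using that a homomorphism between these essentially cyclic modules is determined by its effect on a generator (Theorem~\ref{etubes}). The special tubes $\kT^\la$, where the chain alternates between $j$ and $i\ne j$, need a separate but entirely parallel check. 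Once the level-matching is established, the remaining deduction is purely formal.
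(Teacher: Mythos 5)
Your proposal is correct and is precisely the paper's own argument: the paper proves Theorem~\ref{dact} with the single sentence ``The duality gives analogues of Theorems~\ref{act} and \ref{cohom}'', i.e.\ by transporting Theorem~\ref{act} through the duality $D$ exactly as you do, with the contravariance of $D$ swapping source and target (hence reversing the inequalities) and the shift in the definition of $Z^f_{k,m,n}$ (as $N_{k+1}(n)\setminus N_k$, where $N_k=M_k^\perp\simeq D(M/M_k)$) keeping the filtration levels aligned. Your elaboration --- the Hom-isomorphism under $D$, the matching of the chains \eqref{chain} and \eqref{dchain}, and the level-matching step you flag as the main obstacle --- is in fact more detailed than anything the paper itself provides for this statement.
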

  
   \begin{defin}\label{dnon} 
     \begin{enumerate}
 \item We fix for every quadruple $(f,m,k,n)$, where $k<m$, an element $z^f_{m,k,n}\in Z^f_{m,k,n}$ and for every quintuple $(\la,j,m,k,n)$, where
  $k<m$ and $k,j$ satisfy the condition \eqref{km}, an element $z^{\la j}_{m,k,n}\in Z^{\la j}_{m,k,n}$.
 
 \medskip
 \item   For a homogeneous tube $\kT^f$ we call a \emph{costandard sequence} a sequence $\si=(m_i,k_i)\ (1\le i\le s)$, where 
 $m_1<m_2<\dots<m_s$, $1\le k_i< m_i$ and
 $k_{i'}>k_i>k_{i'}+m_i-m_{i'}$ for $ i<i'$. We set $N^f_\si=\bop_{i=1}^sP^f_{m_i}$ and 
 $z^f_{\si,n}=\sum_{i=1}^se^f_{m_i,k_i,n}$.

\medskip
 \item  For a special tube $\kT^\la$ we call a \emph{costandard sequnce} a sequence $\si=(j_i,m_i,k_i)\ (1\le i\le s)$, where $j_i\in\{1,2\}$,
 $m_1<m_2<\dots<m_s$, $1\le k_i< m_i$, $1\le k_i\le m_i$ and  $k_i<k_{i'}<k_i+m_{i'}-m_i$ for $i'<i$. We set 
 $N^\la_\si=\bop_{i=1}^sT^{\la j_i}_{m_i}$.
 We call such sequence
 \begin{itemize}
  \item  \emph{even} if $k_i\equiv j_i\!\pmod2$ for all $i$,
  \item    \emph{odd} if $k_i\equiv j_i+1\!\pmod2$ for all $i$.
  \end{itemize} 
  For an even (odd) costandard sequence and even (respectively, odd) $n$ we set  
    $z^\la_{\si,n}=\sum_{i=1}^s z^{\la j_i}_{m_i,k_i,n}$.
    
       \medskip
    \item We define \emph{costandard data} as a pair $\De=(\Si,S)$, where $\Si=\{\kT^{f_q}\}$  $(1\le q\le r)$ is a set of different tubes
    and $S=\{\si_q\}$ $(1\le q\le r)$ is a set of costandard sequences $\si_q$ for each tube $T^{f_q}$.  We call such data \emph{special}
    if at least one of the tubes $\kT^{f_q}$ is special. Special costandard data are said to be even  or odd if all costandard sequences $\si_q$
    for special tubes $\kT^{f_q}$ are so. We set $N_\De=\bop_{q=1}^rM^{f_q}_{\si_q}$ and $z_{\De,n}=\sum_{q=1}^s z^{f_q}_{\si_q,n}$.
   In the latter definition we suppose that, if $\De$ is special, it is even if $n$ is even and it is odd if $n$ is odd (otherwise the elements
    $z^{f_q}_{\si,q,n}$ and hence $z_{\De,n}$ are not defined).
 
    \vskip0.5em\noindent
       If the tube $\kT^\8$ occurs in $\Si$, say, $f_k=\8$, 
  we denote by $z_{\De,n}^\8=z^\8_{\si_k,n}$ and by $z_{\De,n}^0$ the rest of the sum defining $z_{\De,n}$. 
  Of course, it is possible that $z_{\De,n}^\8=0$ or $z_{\De,n}^0=0$.
 \end{enumerate}
   \end{defin}
   
   \begin{theorem}\label{dcohom} 
  Let $N=DM$, where $M$ is a regular $R$-lattice, $\eps\in H^n(K,N)$ $(n>0)$. There are costandard data $\De$ and a decomposition
  $\theta:N\ito N_0\+N_\De$
   such that the projection of $\theta(\eps)$ onto $H^n(K,N_0)$ is zero and 
  the projection of  $\theta(\eps)$ onto $H^n(K,N_\De)$ equals $\eta(z_{\De,n})$
  \emph{(see page~\pageref{eta} for the definition of $\eta$).}
  
  \emph{If $\eps=0$, $M_\De=0$.}
 \end{theorem}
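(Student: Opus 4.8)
The plan is to run the argument that derives Theorem~\ref{cohom} from Theorem~\ref{act}, replacing every ingredient by its dual; the contravariant functor $D=\Hom_\mZ(-,\mQ/\mZ)$ is the organizing principle. It carries regular lattices to regular colattices, sends the filtration \eqref{chain} of an indecomposable to the filtration \eqref{dchain} (with $N_k=M_k^\perp$), and turns the homomorphism-existence criterion of Theorem~\ref{act} into that of Theorem~\ref{dact} by reversing arrows and hence all the monotonicity and interleaving inequalities. This is exactly why Definition~\ref{dnon} reverses the inequalities of Definition~\ref{non}: costandard data are the $D$-image of standard data. The visible parity swap between the rules defining $M(n)$ and $N(n)$ reflects the same duality. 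Thus, assuming Theorem~\ref{dact}, it suffices to reproduce the normal-form argument on the colattice side.

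First I would reduce to a componentwise statement. Writing $N=DM$ as a direct sum of indecomposable regular colattices $DT^{f_q}_{m_i}$ or $DT^{\la j}_{m_i}$, grouped by tube, the isomorphism $\eta$ of Theorem~\ref{HnD} together with additivity of $H^n(K,-)$ over direct sums identifies $H^n(K,N)$ with $\bigoplus_i N_i(n)$. Hence $\eps$ splits into components $\eps_i\in N_i(n)$, and the filtration \eqref{dchain} assigns to each nonzero $\eps_i$ a well-defined level: it lies in $Z^{f}_{k_i,m_i,n}=N_{k_i+1}(n)\setminus N_{k_i}$ (or the special-tube analogue). For the special tubes, the parity of $k_i$ relative to $j_i$ is forced by \eqref{km}, which is precisely what guarantees that the data produced are even when $n$ is even and odd when $n$ is odd, so that $z_{\De,n}$ is defined.

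The heart of the proof is the normalization, carried out orbit-wise under $\aut_K N$ together with the freedom of re-choosing the decomposition of $N$. The morphisms supplied by Theorem~\ref{dact} play the role of elementary operations: they specify exactly when a component sitting at $(m,k)$ can be added onto one at $(m',k')$, and provide isomorphisms when $m=m'$ and $k=k'$. Running this reduction inside each tube, the costandard inequalities $m_1<m_2<\dots<m_s$ and $k_{i'}>k_i>k_{i'}+m_i-m_{i'}$ characterize precisely the configurations on which no further nontrivial operation acts — they are the stable normal forms. Everything that the operations can still absorb is collected into a complementary summand $N_0$, on which the projection of $\theta(\eps)$ vanishes, while the surviving components assemble into a costandard datum $\De$ with $\theta(\eps)$ projecting to $\eta(z_{\De,n})$ on $N_\De$. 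The degenerate case $\eps=0$ leaves nothing to normalize, so $N_\De=0$.

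I expect the main obstacle to be the same as in the lattice case: showing that the reduction both terminates in the canonical costandard configuration and can be realized by a genuine isomorphism $\theta:N\ito N_0\+N_\De$, not merely by a set-theoretic rearrangement of cohomology classes. The delicate point is matching the combinatorial ``stable orbit'' conditions against Theorem~\ref{dact} so that the costandard inequalities of Definition~\ref{dnon} exactly describe which elements of the $Z^{f}_{k,m,n}$ survive; this is where the duality with Theorem~\ref{cohom} does the real work, certifying that the bookkeeping transported from the lattice side remains consistent after reversing all arrows.
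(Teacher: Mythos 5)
Your proposal is correct and follows the paper's own route: the paper derives Theorem~\ref{dcohom} precisely by dualizing the lattice-side argument (Theorem~\ref{cohom} from Theorem~\ref{act}), i.e. by using $D$ to transport the filtration \eqref{chain} to \eqref{dchain}, Theorem~\ref{act} to Theorem~\ref{dact}, and the standard-data normalization to the costandard one, which is exactly what you do. Your write-up in fact supplies more of the normalization bookkeeping (componentwise reduction via Theorem~\ref{HnD}, level assignment, parity via \eqref{km}) than the paper, which leaves these details implicit.
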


 \begin{corol}\label{dorbit} 
  Let $N$ be an indecomposable regular colattice. Consider the chain \eqref{dchain} of its submodules and denote by
  $H^n_k(K,N)$ the image in $H^n(K,N)$ of $H^n(K,N_k)$. Then the orbits of $\aut_KN$ on $H^n(K,N)\ (n>0)$
  are $H^n_k(K,N)\setminus H^n_{k-1}(K,N)\ (0< k\le m)$ and $\{0\}$.
 \end{corol}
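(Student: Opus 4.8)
The plan is to obtain Corollary~\ref{dorbit} as the exact dual of Corollary~\ref{orbit}, transporting the orbit description through the duality functor $D=\Hom_\mZ(-,\mD)$ and the reindexing built into the chain \eqref{dchain}. First I would observe that, by the very construction preceding Theorem~\ref{dcohom}, the submodules $N_k=M_k^\perp$ of $N=DM$ are the annihilators of the $M_k$ from the chain \eqref{chain}, so that the inclusions $N_k\sb N_{k+1}$ are dual to the surjections $M_k\epi M_k/M_{k+1}$; consequently $N_{k+l}/N_k\simeq D(M_k/M_{k+l})$ as recorded after \eqref{dchain}. This is the structural input that lets every statement about the decreasing filtration of $M$ be re-read as a statement about the increasing filtration of $N$, with the index $k$ for $N$ playing the role of $m-k$ for $M$.

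Next I would apply Theorem~\ref{dcohom} to the indecomposable $N$: for any nonzero $\eps\in H^n(K,N)$ it produces costandard data $\De$ and a decomposition $\theta:N\ito N_0\+N_\De$ carrying $\eps$ to $\eta(z_{\De,n})$. Since $N$ is indecomposable, the only costandard data that can arise are the trivial ones, namely a single costandard sequence consisting of one pair, so that $N_\De\simeq N$ itself and $z_{\De,n}=z^f_{k,m,n}$ (or the special-tube analogue) for a single index $k$. By the definition of $Z^f_{k,m,n}$ (respectively $Z^{\la j}_{k,m,n}$) as $N_{k+1}(n)\setminus N_k$, the class $\eta(z_{\De,n})$ lies in the image of $H^n(K,N_{k+1})$ but not in the image of $H^n(K,N_k)$; that is, $\eps$ lies in $H^n_{k+1}(K,N)\setminus H^n_k(K,N)$. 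Re-indexing by $k\mapsto k+1$ to match the chain \eqref{dchain}, this says precisely that every nonzero class belongs to exactly one of the sets $H^n_k(K,N)\setminus H^n_{k-1}(K,N)$ for $0<k\le m$, and of course $\eps=0$ gives the orbit $\{0\}$.

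It then remains to check that each such set is a single orbit, i.e.\ that $\aut_KN$ acts transitively on $H^n_k(K,N)\setminus H^n_{k-1}(K,N)$. For one direction, Theorem~\ref{dcohom} itself shows that any two elements of this set are carried to the same normal form $\eta(z^f_{k,m,n})$ by suitable isomorphisms $\theta,\theta'$ of $N$, so $\theta'^{-1}\theta\in\aut_KN$ takes one to the other; here I would use that the representation equivalence of Theorem~\ref{riro} makes these $\theta$ genuine $A$-automorphisms, and that $D$ takes $A$-automorphisms of $M$ to $A$-automorphisms of $N$. For the reverse inclusion, any $\vi\in\aut_KN$ preserves the filtration \eqref{dchain} (by the dual of Theorem~\ref{dact} applied to an automorphism, which forces the index $k$ of a class to be preserved), so it cannot move a class out of $H^n_k\setminus H^n_{k-1}$. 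The main obstacle I anticipate is the careful bookkeeping of the index shift between the decreasing chain for $M$ and the increasing chain for $N$—in particular verifying that $\eta$ matches $\oxi$ under $D$ in the correct degree $n$ (recall the parity of $n$ enters the definitions of $M(n)$ and $N(n)$ with opposite conventions), so that the normal forms $z^f_{k,m,n}$ genuinely detect the filtration level $k+1$ rather than $k$. Once that dictionary is fixed, the transitivity and the exhaustion of orbits follow mechanically from Theorems~\ref{dact} and \ref{dcohom}, exactly as Corollary~\ref{orbit} follows from Theorems~\ref{act} and \ref{cohom}.
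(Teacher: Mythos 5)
Your proposal is correct and follows essentially the route the paper intends: the paper states Corollary~\ref{dorbit} without proof, as an immediate consequence of Theorems~\ref{dact} and \ref{dcohom} obtained by dualizing the lattice case, which is exactly the machinery you invoke (normal form from Theorem~\ref{dcohom} for the indecomposable $N$, transitivity and preservation of the filtration level from Theorem~\ref{dact}, and the identification $H^n_k(K,N)=\eta(N_k(n))$ via Theorem~\ref{HnD}). The only cosmetic slip is calling Theorem~\ref{dact} ``the dual of Theorem~\ref{dact}''---it is already the colattice statement, and the preservation of the index $k$ by automorphisms follows from it by applying it to both $\vi$ and $\vi^{-1}$.
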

 
 \section{Applications}

 \subsection{Crystallographic groups}
 \label{crystal} 

 Recall that a \emph{\crg} $G$ is a discontinuous group of isometries of an Euclidean space having a compact fundamental domain \cite{Szczep}.
 Equivalently, $G$ contains a maximal commutative subgroup $M$ of finite index, which is normal and is a free abelian group of finite rank. 
 Then the group $\Ga=\Ga/M$ acts on $M$ by the rule ${^gv}=\bar{g}v\bar{g}^{-1}$, where $\bar{g}$ is a preimage of $g$ in $G$, 
 and $G$ is given by a class $\eps\in H^2(\Ga,M)$. One easily sees that actually $M$ is a unique maximal abelian subgroup of $G$ of finite index. 
 We call the group $\Ga$ the \emph{top} and the $\Ga$-module $M$ the \emph{base} of the \crg\ $G$.  
 If $\vi:G\ito G'$, where $G'$ is another \crg, then $M'=\vi(M)$ is the maximal commutative subgroup of $G'$, i.e. the base of $G'$.
 Hence $\Ga'=G'/M'$ is the top of $G'$ and we have a commutative diagram
 \[
  \xymatrix{ 1 \ar[r] & M \ar[r] \ar[d]_{\theta} & G \ar[r] \ar[d]_\vi & \Ga \ar[r] \ar[d]_{\psi} & 1 \\
  				  1 \ar[r] & M' \ar[r]  & G' \ar[r]  & \Ga' \ar[r]  & 1	  } ,
 \]
 where $\theta$ and $\psi$ are isomorphisms. Let ${^\psi\!M'}$ be the group $M'$ considered as $G$-module by the rule ${^gu}={^{\psi(g)}u}$ for
 $g\in G,\,u\in M'$. Then $\theta$ is an isomorphism $M\ito {^\psi\!M'}$ and the cohomology class defining the group $G'$ is $\theta \eps\psi^{-1}$.
 Therefore, isomorphism classes of crystallographic groups with the top $\Ga$ and the base $M$ are in \oc\ with the orbits of the action of the group
 $\Aut G\xx\Aut_GM$ on $H^2(G,M)$.
 
 Therefore, Theorem~\ref{cohom} implies a classification result for \crg{s} with the Kleinian top and regular base.
 
 \begin{defin}\label{cgrs} 
  Let $\De$ be standard data, even if they are special. We call the group $\Cr(\De)$ that is the extension of $K$ with the kernel $M$ corresponding to
  the cohomology class $\eps=\xi(e_{\De,2})$ a \emph{standard \crg}.
  
  \smallskip
  Note that $\Cr(\De)$ is generated by the group $M$ and two elements $\ba$ and $\bb$ subject to the relations
  \begin{align*}
   & \ba w=({^aw})\ba \text{\ \emph{for every}\ } w\in M,\\
   &\bb w=({^bw})\bb \text{\ \emph{for every}\ } w\in M,\\
   & \ba\bb=\bb\ba,\\
   & \ba^2=e_{\De,2}^0,\\
   & \bb^2=e_{\De,2}^\8.
  \end{align*}
 \end{defin}
 
 \begin{theorem}\label{cryst}
  Let $G$ be a \crg\ with the Kleinian top $K$ such that its base $M$ is a regular $K$-lattice.
 \begin{enumerate}
   \item   There are standard data $\De$, 
  a direct decomposition $M\simeq M_0\+M_\De$
  and a semidirect decomposition $G\simeq M_0\rtimes \Cr(\De)$, where $\Cr(\De)$ acts on $M_0$ as
  its quotient $\Cr(\De)/M_\De\simeq K$.
  
  \item  If $G\simeq M'_0\rtimes \Cr(\De')$ is another such decomposition,  there is an automorphism $\psi$ of the group $K$
  such that $M'_0\simeq{^\psi\!M_0}$ and $\De'=\psi\De$.
   \end{enumerate}  
 
 \end{theorem}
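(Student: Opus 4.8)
The plan is to read the group $G$ off from its cohomology class and then put that class into the normal form supplied by Theorem~\ref{cohom}, from which both assertions follow. Since $G$ is a \crg\ with top $K$ and base $M$, it is the extension of $K$ by $M$ determined by a class $\eps\in H^2(K,M)$. I would apply Theorem~\ref{cohom} with $n=2$: there are standard data $\De$ (necessarily even, since $n=2$ is even) and an isomorphism $\theta:M\ito M_0\+M_\De$ of $K$-modules under which $\theta(\eps)=(0,\oxi(e_{\De,2}))$ relative to the splitting $H^2(K,M_0\+M_\De)=H^2(K,M_0)\+H^2(K,M_\De)$. Because the $M_0$-component of $\theta(\eps)$ vanishes, I may choose a normalized $2$-cocycle $c$ representing $\theta(\eps)$ whose $M_0$-component is identically zero, i.e.\ $c(g,h)\in M_\De$ for all $g,h\in K$; its $M_\De$-component $c_\De$ then represents $\oxi(e_{\De,2})$.

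Realizing $G$ (transported by $\theta$) on the set $(M_0\+M_\De)\xx K$ with product $((u_0,u),g)((v_0,v),h)=((u_0+{}^gv_0,\,u+{}^gv+c_\De(g,h)),\,gh)$, I would first note that $M_0=\{((w,0),1)\}$ is a normal subgroup, being the image of the $K$-submodule $M_0\sbe M$. The subset $H=\{((0,u),g)\mid u\in M_\De,\ g\in K\}$ is a subgroup precisely because the $M_0$-component of $c$ is zero, and by construction $H$ is the extension of $K$ by $M_\De$ with cocycle $c_\De$, so $H\simeq\Cr(\De)$ by Definition~\ref{cgrs}. One checks $M_0\cap H=1$ and, using $c(1,g)=0$, that $M_0H=G$; hence $G\simeq M_0\rtimes\Cr(\De)$. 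Finally, conjugation of $M_0$ by $H$ is computed through the $K$-action on $M$ and so factors through the quotient $\Cr(\De)/M_\De\simeq K$, giving the asserted action and completing part (1).

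For part (2) the plan is to invoke the one-to-one correspondence between isomorphism classes of such groups and orbits of $\Aut K\xx\aut_KM$ on $H^2(K,M)$. Two semidirect decompositions $G\simeq M_0\rtimes\Cr(\De)\simeq M_0'\rtimes\Cr(\De')$ exhibit, after fixing an identification of the intrinsic top $G/M$ with $K$, two representatives $\oxi(e_{\De,2})$ and $\oxi(e_{\De',2})$ of the same orbit, the two identifications differing by an automorphism $\psi\in\Aut K$. Using the equivariance recorded after Theorem~\ref{cohom}, namely $\psi\,\xi(e_{\De,2})=\vi\,\xi(e_{\psi\De,2})$ for a suitable $\vi\in\aut M_{\psi\De}$, applying $\psi$ to the first normal form produces (up to a base automorphism) the standard class of $\psi\De$. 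Thus $\oxi(e_{\De',2})$ and $\xi(e_{\psi\De,2})$ lie in one $\aut_KM$-orbit, and I would conclude $\De'=\psi\De$ from uniqueness of the standard-data normal form. Then $M_{\De'}\simeq{}^\psi M_\De$, and cancelling this common summand in $M_0'\+M_{\De'}\simeq{}^\psi(M_0\+M_\De)$ by Krull--Schmidt for regular lattices yields $M_0'\simeq{}^\psi M_0$.

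The main obstacle is exactly the uniqueness used at the end of part (2): that distinct standard data not related by an automorphism of $K$ produce cohomology classes in distinct $\aut_KM$-orbits. This does not follow formally from the existence asserted in Theorem~\ref{cohom} and must be extracted from the combinatorics of standard sequences, where the inequalities $m_1>m_2>\dots$ and $k_{i'}<k_i<k_{i'}+m_i-m_{i'}$ are precisely the conditions under which Theorem~\ref{act} forbids any further simplification; the multiset $\{(m_i,k_i)\}$, together with the tube labels and, in the special case, the parities $j_i$, is thereby recovered from the orbit, pinning down $\De$ up to the $S_3$-action on tubes. I expect this combinatorial injectivity, rather than the group-theoretic construction in part (1), to be the crux of the argument.
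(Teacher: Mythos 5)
Your proposal is correct and takes essentially the same route as the paper, which states Theorem~\ref{cryst} with no separate proof, as a direct consequence of the orbit correspondence set up in Subsection~\ref{crystal}, the normal form of Theorem~\ref{cohom}, and the $\Aut K$-equivariance $\psi\xi(e_{\De,n})=\vi\xi(e_{\psi\De,n})$ noted after it; your explicit semidirect-product construction in part (1) just fills in the standard cocycle argument the paper leaves tacit. The uniqueness point you flag in part (2) is likewise left implicit by the paper: it rests on Theorem~\ref{act} (equivalently Corollary~\ref{orbit}), whose inequalities are exactly the defining conditions of standard sequences, so your reconstruction, including its identification of where the real content lies, matches the intended argument.
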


 \begin{remk}
  $G$ is crystallographic \iff $M_{\al\be}\ne0$ for at least two of the pairs $(+-),(-+),(--)$. For a regular $K$-lattice $M$ it means that it is not
  a multiple of some lattice $T^{\la1}_1\ (\la\in\{0,1,\8\})$.
 \end{remk}
 
  \subsection{Chernikov groups}
  \label{chernikov} 
  
  Recall that a \emph{\chg} is a locally finite group with minimality condition on subgroups \cite{cher}. Such a group $G$ has a maximal divisible subgroup
  $N$ which is a finite direct sum of quasicyclic groups and $N$ is normal in $G$ with the finite quotient $\Ga=G/N$. We consider the case when
   $G$ is a $2$-group. Then $N$ is a direct sum of groups $\mD$
  of type $2^\8$ and $\Ga$ is a finite $2$-group. It is known that $\End\mD\simeq\mZ_2$. 
  Therefore, if $N=\mD^d$, then $\aut_\mZ N\simeq\GL(d,\mZ_2)$. Hence $N\simeq DM$ for some $\Ga$-lattice $M$.
  The group $\Ga$ and the  $G$-module $N$ are defined up to an isomorphism. We call $\Ga$  the \emph{top} and $N$ the \emph{base} of the \chg\ $G$. 
  Again, the isomorphism classes of Chernikov groups with the top $\Ga$ and the base $N$ are in \oc\ with the orbits of the group
  $\aut \Ga\xx\aut_\Ga N$ on the cohomology group $H^2(\Ga,N)$.
  
  Theorem~\ref{dcohom} implies the following description of Chernikov groups with the Kleinian top and regular bottom.
  
   \begin{defin}\label{chrs} 
  Let $\De$ be costandard data, even if they are special. We call the group $\Ch(\De)$ that is the extension of $K$ with the kernel $N$ corresponding to
  the cohomology class $\eps=\eta(z_{\De,2})$ a \emph{standard \chg}.
  
  \smallskip
  Note that $\Ch(\De)$ is generated by the group $N$ and two elements $\ba$ and $\bb$ subject to the relations
  \begin{align*}
   & \ba w=({^aw})\ba \text{\ \emph{for every}\ } w\in N,\\
   &\bb w=({^bw})\bb \text{\ \emph{for every}\ } w\in N,\\
   & \ba\bb=\bb\ba,\\
   & \ba^2=z^0_{\De,2}\\
   & \bb^2=z^\8_{\De,2}.
  \end{align*}
 \end{defin}
 
 \begin{theorem}\label{chern}
  Let $G$ be a \chg\ with the Kleinian top $K$ such that its base $N$ is a regular $K$-colattice. 
\begin{enumerate}
  \item  There are costandard data $\De$, a direct decomposition $N=N_0\+N_\De$
  and a semidirect decomposition $G\simeq N_0\rtimes \Ch(\De)$, where $\Ch(\De)$ acts on $N_0$ as
  its quotient $\Ch(\De)/N_\De\simeq K$.
  
  \item  If $G\simeq N'_0\rtimes \Ch(\De')$ is another such decomposition,  there is an automorphism $\psi$ of the group $K$
  such that $N'_0\simeq{^\psi\!N_0}$ and $\De'=\psi\De$.
  \end{enumerate}  
 \end{theorem}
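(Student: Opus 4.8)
The plan is to deduce Theorem~\ref{chern} from the cohomological classification already established, exactly as Theorem~\ref{cryst} follows from Theorem~\ref{cohom} in the lattice case. The guiding principle, stated just before Definition~\ref{chrs}, is that isomorphism classes of Chernikov $2$-groups with top $K$ and base $N$ correspond bijectively to orbits of $\aut K\xx\aut_KN$ on $H^2(K,N)$. Since the base $N=DM$ is a regular $K$-colattice, I would first invoke Theorem~\ref{dcohom}: given the extension $G$ of $K$ by $N$ defined by a class $\eps\in H^2(K,N)$, there exist costandard data $\De$ and an isomorphism $\theta:N\ito N_0\+N_\De$ carrying $\eps$ to a class whose $N_0$-component vanishes and whose $N_\De$-component equals $\eta(z_{\De,2})$. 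This already produces the direct decomposition $N=N_0\+N_\De$ asserted in part (1).

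Next I would translate this cohomological splitting into the group-theoretic semidirect decomposition. Because the $N_0$-component of (the transported) $\eps$ is zero, the extension restricted to $N_0$ splits: the subgroup of $G$ generated by $N_0$ together with lifts of $a,b$ is a semidirect product $N_0\rtimes K$, on which $N_0$ carries its $K$-module structure. The remaining data, the $N_\De$-component $\eta(z_{\De,2})$, is precisely the class defining the standard Chernikov group $\Ch(\De)$ of Definition~\ref{chrs}; indeed the relations listed there (with $\ba^2=z^0_{\De,2}$ and $\bb^2=z^\8_{\De,2}$) are exactly the reading of a $2$-cocycle of the shape $\eta(z_{\De,2})$ against the free resolution $\bP$, using the splitting of $z_{\De,2}$ into its $\kT^\8$-part and its complement as recorded at the end of Definition~\ref{dnon}. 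Since $K$ acts on $N_0$ through the quotient $\Ch(\De)/N_\De\simeq K$, assembling these two pieces yields $G\simeq N_0\rtimes\Ch(\De)$, proving (1).

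For part (2) I would use the uniqueness half of the orbit description, i.e. Corollary~\ref{dorbit} packaged through the $\aut K$-action. Suppose $G\simeq N'_0\rtimes\Ch(\De')$ is a second such decomposition. Both decompositions realize $G$ as an extension of $K$ by the same colattice $N$, so the two defining classes $\eta(z_{\De,2})$ and $\eta(z_{\De',2})$ lie in one orbit of $\aut K\xx\aut_KN$. An element of $\aut_KN$ alone cannot change the costandard data, because Theorem~\ref{dact} shows that the costandard normal form produced by Theorem~\ref{dcohom} is a complete invariant for the $\aut_KN$-action; hence the two classes differ by an automorphism $\psi\in\aut K$. Tracking how $\psi$ acts on tubes (via the substitutions $f\mapsto f^{(2)},f^{(3)}$ recorded in Subsection~\ref{s31}, which pass to colattices by duality) gives $\De'=\psi\De$, and the induced identification of complementary summands yields $N'_0\simeq{^\psi\!N_0}$.

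The main obstacle I expect is bookkeeping the duality, not any genuinely new idea: one must check that the $2$-cocycle description for colattices built from $\eta$ (page~\pageref{eta}) matches the group relations in Definition~\ref{chrs} under the resolution $\bP$, and that the $\aut K$-action on standard data transfers verbatim to costandard data under $D$. Because $D=\Hom_\mZ(-,\mD)$ is an exact contravariant duality interchanging submodule chains \eqref{chain} with the dual chains \eqref{dchain}, Theorems~\ref{dact} and~\ref{dcohom} were set up precisely to make this transfer mechanical; so once the cocycle-to-relations dictionary is verified, the rest is a formal dualization of the argument proving Theorem~\ref{cryst}.
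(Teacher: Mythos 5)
Your proposal is correct and takes essentially the same route as the paper: the paper offers no separate argument for Theorem~\ref{chern}, presenting it as an immediate consequence of the orbit correspondence for Chernikov groups (isomorphism classes $\leftrightarrow$ orbits of $\aut K\xx\aut_KN$ on $H^2(K,N)$) combined with the normal form of Theorem~\ref{dcohom}, exactly mirroring how Theorem~\ref{cryst} follows from Theorem~\ref{cohom}. Your write-up merely makes explicit the cocycle-to-semidirect-product translation and the uniqueness bookkeeping via Theorem~\ref{dact} and the $\aut K$-action, which the paper leaves implicit.
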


 Note that there is an amazing parallelism between chrystallographic and Chernikov groups. Perhaps, it is of general nature, and is worth to
 study.
 
 \bigskip
 {\small
 \textbf{Aknowledgement}
 
 \medskip
 The authors are greatful to the referee of the paper for useful notices that were used to essentially improve the exposition.}

\end{document}